\documentclass{amsart}
\usepackage{amsmath}
\usepackage{url}
\usepackage{mathrsfs}
\usepackage{enumitem}   
\usepackage{mathtools}  
\usepackage{amssymb, graphicx, amsrefs}
\usepackage{svg}
\usepackage{booktabs}
\usepackage{siunitx}
\usepackage{color}
\usepackage{graphicx}
\usepackage{float} 
\usepackage{subfigure}
\usepackage{bbm}
\usepackage{hyperref}
\usepackage{epstopdf}
\usepackage{fancyhdr} 
\usepackage{cases}

\copyrightinfo{2022}{Jin Sun}

\newtheorem{theorem}{Theorem}[]
\newtheorem{lemma}[theorem]{Lemma}

\theoremstyle{definition}
\newtheorem{definition}[theorem]{Definition}

\theoremstyle{remark}
\newtheorem{remark}[theorem]{Remark}

\numberwithin{equation}{section}

\newcommand{\partiald}[2]{  \frac{\partial #1 }{\partial #2}}

\newcommand{\bracket}[1]{\left( #1\right)}

\newcommand{\av}[1]{\left\vert #1\right\vert}
\newcommand{\aV}[1]{\left\Vert #1\right\Vert}

\newcommand{\R}{\mathbb{R}}

\providecommand{\eat}[1]{}

\newcommand{\Hm}[1]{\leavevmode{\marginpar{\tiny%
			$\hbox to 0mm{\hspace*{-0.5mm}$\leftarrow$\hss}%
			\vcenter{\vrule depth 0.1mm height 0.1mm width \the\marginparwidth}%
			\hbox to 0mm{\hss$\rightarrow$\hspace*{-0.5mm}}$\\\relax\raggedright
			#1}}}

\begin{document}
	
    \title[Parabolic Frequency on Gaussian Spaces]{Parabolic Frequency on Gaussian Spaces and Unique Continuation}
    
    \author{Jin Sun}
    \address{Jin Sun, School of Mathematical Sciences, Fudan University, 200433, Shanghai, China}
    \email{jsun22@m.fudan.edu.cn}

    \author{Kui Wang}
    \address{Kui Wang, School of Mathematical Sciences, Soochow University, 215006, Suzhou, China}
    \email{kuiwang@suda.edu.cn}
    
\subjclass[2020]{Primary 35B60; Secondary 35K10}
	
	
\begin{abstract}
We establish an almost-monotonicity formula for a parabolic frequency on Gaussian spaces for solutions of the Ornstein-Uhlenbeck heat equation with lower-order terms:
$$\partial_t u = L_\gamma u + b(x,t) \cdot \nabla u + c(x,t)u, $$
where $L_\gamma = \Delta - x \cdot \nabla$ is the Ornstein-Uhlenbeck operator. In contrast to classical results that require $b$ and $c$ to be bounded, we only assume that $b$ is bounded and $c$ satisfies a linear growth condition, while the solution $u$ is allowed to have at most exponential quadratic growth. The key innovation is a weighted $L^2$ framework that uses the backward Mehler kernel as a weight, which naturally encodes the underlying measure and compensates for the unbounded coefficients. From the frequency monotonicity, we derive the strong unique continuation principle. This extends Poon's seminal results \cite{Poon1996} and complements recent geometric generalizations  by Colding and Minicozzi \cite{ColdingMinicozzi2022} in the context of Gaussian measure spaces. We further apply our framework to establish unique continuation for equations with potentials exhibiting quadratic growth or certain singularities. 
\end{abstract} 
   
\maketitle
	
\section{Introduction}

The frequency function method was introduced by Almgren \cite{Almgren1979} and systematically developed by Garofalo and Lin \cite{GarofaloLin1986} to study the unique continuation property of elliptic equations. Lin \cite{Lin1990} extended these techniques to parabolic equations, establishing a uniqueness theorem for solutions of the heat equation. Subsequently, Poon \cite{Poon1996} established the monotonicity of parabolic frequency for solutions of $\partial_t u - \Delta u = b \cdot \nabla u + cu$ with bounded coefficients, proving that solutions vanishing to infinite order at any space-time point must be identically zero. Carleman estimate techniques provide a complementary approach to unique continuation for parabolic equations with less regular coefficients, as demonstrated in \cite{KochTataru2009} for second-order parabolic equations with unbounded coefficients. Ni \cites{NiKahler2007, Ni2015} developed matrix Li-Yau-Hamilton estimates for Kähler-Ricci flow and studied parabolic frequency monotonicity in connection with Hardy-Pólya-Szegö theorems. Recently, Li and Wang \cite{LiWang2019} obtained almost-monotonicity formulas on compact manifolds with error terms depending on curvature bounds. Extending these techniques to general Ricci flows, Baldauf and Kim \cite{BaldaufKim2023} proved parabolic frequency monotonicity along  Ricci flows, and Li and Zhang \cite{LiZhang2024} established matrix Li-Yau-Hamilton estimates for the heat equation under Ricci flow and proved monotonicity of parabolic frequency under non-negativity assumptions on sectional curvature.

The Ornstein-Uhlenbeck operator $L_\gamma = \Delta - x \cdot \nabla$ and the associated heat equation on Gaussian spaces have been extensively studied from various perspectives, including their spectral properties in $L^p$ spaces with invariant measure \cites{Metafune2001, Metafune2002} and the theory of Ornstein-Uhlenbeck semigroups \cite{LunardiDaPrato2020}. Colding and Minicozzi \cite{ColdingMinicozzi2018} established sharp frequency bounds for the eigenfunctions of the Ornstein-Uhlenbeck operator. In their subsequent work \cite{ColdingMinicozzi2022}, they proved parabolic frequency monotonicity on general Riemannian manifolds with drift Laplacians, which when specialized to Euclidean space with Ornstein-Uhlenbeck drift implies backward uniqueness for the parabolic equation. However, the development of a well-suited frequency function method for the strong unique continuation property of parabolic equations in this setting, particularly for equations with unbounded coefficients, remains largely open. This gap is notable given the success of such methods in Euclidean and geometric settings. The main contribution of this paper is to fill this gap by establishing the monotonicity of parabolic frequency and the strong unique continuation property for heat equations on Gaussian spaces with lower-order terms of linear or quadratic growth and certain singularities.

Here, 
we denote the Gaussian measure on $\mathbb{R}^n$ by
\begin{equation}\label{E:Gaussian_measure}
    d\gamma = \frac{1}{(2\pi)^{n/2}}e^{-{\av{x}^2}/{2}}.
\end{equation}
Assume that $\Omega\subset\R^n$ is a convex domain. We consider the following heat equation in Gaussian space:
\begin{equation}\label{heat_equation}
        \begin{cases}
            \partial_t u= L_\gamma u+b\cdot\nabla u+cu,  &\qquad (x,t)\in \Omega\times(-T,+\infty),\\
            u(x,t)=0,&\qquad x\in \partial\Omega\times(-T,+\infty),
        \end{cases}
    \end{equation}
where $T>0$ is a constant. When $\Omega=\R^n$,  the problem reduces to the Cauchy problem in the whole space (no boundary condition is needed).

More generally, we will consider the following nonlinear parabolic equation:
\begin{equation}\label{nonlinear_equation}
    \begin{cases}
            \partial_t u = L_\gamma u -  F(u,\nabla u),  &\qquad (x,t)\in \Omega\times(-T,+\infty),\\
            u(x,t)=0,&\qquad x\in \partial\Omega\times(-T,+\infty).
        \end{cases}
\end{equation}

\begin{definition}
    We say that a function $f:\Omega\rightarrow \R$ has $G(A,B)$-growth if there exist constants $A,B>0$, such that
    \begin{equation}\label{E:Gaussian_growth}
        \av{f(x)}\leq B e^{A{\av{x}^2}}, \quad x\in \Omega.
    \end{equation}

\end{definition}

Throughout this paper, we assume that the coefficient $b = (b_1, b_2, \ldots, b_n)$ is bounded and $c$ has at most linear growth:
\begin{equation}\label{b_and_c_bounded}
    \sup_{x \in \mathbb{R}^n, t > 0} \left(|b(x,t)|+  \av{\frac{c(x,t)}{1+\av{x}}} \right) \leq  L
\end{equation}
for some constant $L>0$. 
We say that a function $u$ vanishes to order $K>0$ in space-time at $(x_0, t_0)$ if
\begin{equation}\label{E:vanishing_order_K}
    u(x,t) = O\left( (|x - x_0|^2 + |t - t_0|)^K \right)
\end{equation}
for $(x, t)$ near $(x_0, t_0)$, $t < t_0$. And we say that a function $u$ vanishes to infinite order in space-time at $(x_0, t_0)$ if \eqref{E:vanishing_order_K} holds for any integer $K>0$.

To analyze solutions of \eqref{heat_equation}, we introduce the weighted $L^2$ integrals with respect to the backward Mehler kernel and the Gaussian measure. When $b=0$, $c=0$, we have the following theorem, which tells us that the parabolic frequency of solutions of the heat equation with Dirichlet boundary is monotonically increasing.
\begin{theorem}\label{T:theorem_1}
    Assume that $\Omega\subset\R^n$ is a convex domain or $\Omega=\R^n$. Let $u$ be a solution of the heat equation \eqref{heat_equation} with $b=0$, $c=0$. Suppose that for every $t>-T$, $u\in  W^{2,2}\bracket{\Omega;e^{-2A\av{x}^2}d\gamma}$ for some constant $A>0$. Let $T_0=\frac{1}{2}\min\{\log\bracket{1+\frac{1}{8(A+\pi)}},T\}$.  Then  the parabolic frequency $N(\tau)$ given by \eqref{equation_N} increases monotonically for $0<\tau<T_0$.
\end{theorem}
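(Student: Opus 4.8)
The plan is to transplant the Garofalo--Lin--Poon frequency scheme to the weighted space built from the backward Mehler kernel. Write $\tau=-t$ for the backward time variable, so $\tau$ ranges in $(0,T)$, and let $\rho(x,\tau)$ be the backward Mehler kernel entering \eqref{equation_N}: based at a point $x_0\in\Omega$, it solves $\partial_\tau\rho=L_\gamma\rho$ with $\rho(\cdot,\tau)\,d\gamma\to\delta_{x_0}$ as $\tau\downarrow0$, and its Gaussian factor against $d\gamma$ has quadratic-in-$x$ exponent $\tfrac12(1-e^{-2\tau})^{-1}$, a function that decreases in $\tau$ and stays strictly above $2A$ precisely on the interval $(0,T_0)$ cut out by the stated formula for $T_0$. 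Set
\[
 I(\tau)=\int_\Omega u^2\,\rho\,d\gamma,\qquad D(\tau)=\int_\Omega |\nabla u|^2\,\rho\,d\gamma,
\]
so that $N(\tau)$ is, up to the intrinsic-time normalization appearing in \eqref{equation_N}, the ratio $D(\tau)/I(\tau)$. The hypothesis $u\in W^{2,2}(\Omega;e^{-2A|x|^2}d\gamma)$ combined with the above decay guarantees, on $(0,T_0)$, that $I$, $D$ and their first two $\tau$-derivatives are finite and that the manipulations below---differentiation under the integral sign, two integrations by parts, and the vanishing of the contributions at infinity when $\Omega=\mathbb{R}^n$---are all legitimate.

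The computational heart is the evaluation of $I'(\tau)$ and $D'(\tau)$. Using $\partial_t u=L_\gamma u$, the equation $\partial_\tau\rho=L_\gamma\rho$, self-adjointness of $L_\gamma$ in $L^2(d\gamma)$, and one integration by parts whose boundary term vanishes because $u=0$ on $\partial\Omega$, one obtains $I'(\tau)=2D(\tau)$ up to the time normalization. For $D'(\tau)$ one differentiates $|\nabla u|^2$, uses $\partial_t(\nabla u)=\nabla(L_\gamma u)$, and integrates by parts twice; the term $\int_{\partial\Omega}\partial_t u\,\partial_\nu u\,\rho\,d\gamma$ drops because $u$, hence $\partial_t u=L_\gamma u$, vanishes on $\partial\Omega$, and the only surviving boundary integral is $\int_{\partial\Omega}(\partial_\nu u)^2\langle Y,\nu\rangle\,\rho\,d\gamma$, with $Y$ the position-type vector field produced by the $-x\cdot\nabla$ drift of $L_\gamma$ together with $\nabla\log\rho$; convexity of $\Omega$ ensures $\langle Y,\nu\rangle$ has the favorable sign along $\partial\Omega$ (here $x_0\in\Omega$ gives $\langle x-x_0,\nu\rangle\ge0$), so this term may be discarded. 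The interior terms generated by the drift, once paired against $\rho$, must be shown to reorganize---this is exactly where the defining identity of the Mehler kernel enters, and why that particular weight is the correct one---so that, after rewriting $D(\tau)$ by means of the Mehler-scaling operator $\mathcal{D}$ (the Gaussian analogue of Poon's parabolically rescaled Laplacian, assembled from $\partial_t$, the drift, and $\nabla\log\rho$), the frequency appears as the Rayleigh quotient $\tfrac12\langle u,\mathcal{D}u\rangle_\rho\big/\langle u,u\rangle_\rho$ of $\mathcal{D}$ along the flow.

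Monotonicity then follows from the Cauchy--Schwarz inequality $\langle u,\mathcal{D}u\rangle_\rho^{\,2}\le\langle u,u\rangle_\rho\,\langle\mathcal{D}u,\mathcal{D}u\rangle_\rho$ together with the differential identities above, by the same mechanism as in Poon's treatment of the Euclidean heat equation: it forces $\tfrac{d}{d\tau}\log N(\tau)\ge0$ for $0<\tau<T_0$. I anticipate two main obstacles. The first is the rigorous justification, from the weighted $W^{2,2}$ bound alone, of differentiation under the integral sign and of the two integrations by parts, in particular the vanishing of all terms at infinity when $\Omega=\mathbb{R}^n$; I would handle this by a cutoff argument whose error is estimated using the weighted $W^{2,2}$ norm and the Mehler decay, with the explicit value of $T_0$ being exactly what makes the estimate uniform over $(0,T_0)$. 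The second is the algebraic bookkeeping of the drift-generated interior terms in $D'(\tau)$: they must close up into the Cauchy--Schwarz structure with \emph{no} residual error, so that one obtains genuine monotonicity rather than merely an almost-monotonicity formula; this is the step where the specific structure of the Ornstein--Uhlenbeck operator and its Mehler kernel is indispensable.
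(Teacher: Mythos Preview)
Your proposal is correct and follows essentially the same route as the paper, whose proof of the theorem is a one-line corollary of a lemma computing $N'(\tau)$ via exactly the identities you describe, with the boundary term discarded by convexity and the interior terms closed up by Cauchy--Schwarz. The algebraic closure you flag as your second obstacle is achieved in the paper through the exact matrix identity $\nabla_i\nabla_j M - M^{-1}\nabla_i M\,\nabla_j M + \tfrac{e^{2t}}{1-e^{2t}}M\,\delta_{ij}=0$ for the backward Mehler kernel, which converts the Hessian-of-$M$ term arising in $I'(\tau)$ into the perfect square $\bigl(u_t+\nabla u\cdot\nabla\log M\bigr)^2 M$ plus precisely the piece cancelled by differentiating the prefactor $(1-e^{-2\tau})$ in $N$.
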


The construction of weighted $L^2$ integrals naturally incorporates the Gaussian structure of the problem and allows us to handle the linear growth of the coefficients. The resulting parabolic frequency satisfies an almost-monotonicity property that is sufficient to derive a strong unique continuation. And compared to \cite{Poon1996}*{Theorem 1.1}, which requires $b,c,u,\nabla u$ to be bounded, in this paper we only require that $b$ is bounded, $c$ has linear growth, $u$ has $G(A,B)$-growth and $\av{\nabla u}\in  W^{1,2}\bracket{\Omega;e^{-2A\av{x}^2}d\gamma}$. That is, we have the following main theorem.

\begin{theorem}\label{main_theorem}
    Let $u$ be a solution of \eqref{heat_equation} on $\Omega$ and $b$ and $c$ satisfy the assumption \eqref{b_and_c_bounded}. Suppose that for every $t>-T$, $u(x,t)$ has $G(A,B(t))$-growth and $\av{\nabla u}\in  W^{1,2}\bracket{\Omega;e^{-2A\av{x}^2}d\gamma}$ for some constant $A>0$ and for some continuous function $B(t)\in C((-T,+\infty))$. If $u$ vanishes to infinite order in space-time at $(x_0,t_0)\in \Omega\times(-T,+\infty)$, then $u$ is identically zero.
\end{theorem}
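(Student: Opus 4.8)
The plan is to center the parabolic frequency of \eqref{equation_N} at $(x_0,t_0)$, use its almost-monotonicity together with the infinite vanishing to force $u$ to vanish on a whole backward parabolic slab, and then propagate this vanishing in time.

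\emph{Step 1: almost-monotonicity with the lower-order terms.} Repeating the computation behind Theorem~\ref{T:theorem_1} but now retaining $b\cdot\nabla u+cu$, I would establish an almost-monotonicity formula for the frequency $N(\tau)$ associated with the backward Mehler kernel $\rho=\rho_{(x_0,t_0)}$: writing $I(\tau)$ for the weighted $L^2$-mass and $D(\tau)$ for the weighted Dirichlet-type energy at time $t_0-\tau$, one differentiates $\log I$ and $\log D$, uses the equation, and obtains both $\bigl|\,I'(\tau)/I(\tau)-2N(\tau)/\tau\,\bigr|\le C$ and a Cauchy--Schwarz/commutator identity for $N'(\tau)$, leading to $\frac{d}{d\tau}\bigl(e^{C\tau}(N(\tau)+C)\bigr)\ge 0$ on $(0,T_0')$ for a constant $C=C(L,A)$ and a window $T_0'=T_0'(A,t_0+T)>0$. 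The delicate point — which I expect to be the main obstacle — is that $c$ is only of linear growth, so the terms $\int_\Omega c\,u^2\rho\,d\gamma$ and $\int_\Omega c\,|\nabla u|^2\rho\,d\gamma$ are not obviously finite, let alone lower order; this is exactly where the weighted $L^2$ framework enters, since for $\tau<T_0'$ the measure $\rho\,d\gamma$ is a Gaussian with variance $O(\tau)$, whose decay dominates the $G(A,B(t))$-growth of $u$ and, via $|\nabla u|\in W^{1,2}(\Omega;e^{-2A|x|^2}d\gamma)$, of $\nabla u$, making $|x|$ times these integrands absolutely integrable; the contributions of the bounded $b$ are then routine. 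On a convex $\Omega$ the Dirichlet condition keeps all boundary terms in the integrations by parts of the favorable sign (or absent), exactly as in Theorem~\ref{T:theorem_1}.

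\emph{Step 2: infinite vanishing kills $u$ on a backward slab.} Suppose $u(\cdot,t_0-\tau)\not\equiv0$ for $\tau$ in some interval $(0,\eta)$, so $N(\tau)$ is defined and almost-monotone there; then $N$ is bounded on $(0,\tau_0)$ by some $M<\infty$ (for any fixed $\tau_0\in(0,\min\{\eta,T_0'\})$), and integrating $I'(\tau)/I(\tau)\le 2M/\tau+C$ gives a polynomial lower bound $I(\tau)\ge c_0\,\tau^{2M}$ for small $\tau$. On the other hand, \eqref{E:vanishing_order_K} for every $K$, together with the Gaussian-concentrating kernel $\rho$, forces $I(\tau)\le C_K\,\tau^{K}$ for every $K$ — a contradiction once $K>2M$. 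A short connectedness argument (the set $\{\tau\in(0,T_0'):I(\tau)>0\}$ cannot have a left endpoint in $(0,T_0')$, since near such a point $I'/I\to-\infty$ and hence $N\to+\infty$, against almost-monotonicity) upgrades this to $I\equiv0$ on $(0,T_0')$. Since $\rho>0$ on all of $\Omega$, $I(\tau)=0$ means $u(\cdot,t_0-\tau)\equiv0$; hence $u\equiv0$ on $\Omega\times(t_0-T_0',t_0)$.

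\emph{Step 3: propagation and forward uniqueness.} Put $t_*=\inf\{t\in(-T,t_0): u\equiv0\text{ on }\Omega\times(t,t_0)\}$; by Step~2, $t_*\le t_0-T_0'<t_0$, and by continuity $u\equiv0$ on $\Omega\times[t_*,t_0)$ if $t_*>-T$. Assume $t_*>-T$; then $\delta_0:=\tfrac12\min\{\log(1+\tfrac1{8(A+\pi)}),\,t_*+T\}>0$ is a uniform lower bound for the monotonicity window $T_0'$ at any base time $\ge t_*$. Choose $t'\in(t_*,\min\{t_*+\delta_0,t_0\})$: then $u\equiv0$ in a backward parabolic neighborhood of $(x_0,t')$, so $u$ vanishes to infinite order there, and Steps~1--2 re-centered at $(x_0,t')$ give $u\equiv0$ on $\Omega\times(t'-\delta_0,t')$; combined with $u\equiv0$ on $\Omega\times[t_*,t_0)$ and $t'-\delta_0<t_*\le t'<t_0$, this yields $u\equiv0$ on $\Omega\times(t'-\delta_0,t_0)$ with $t'-\delta_0<t_*$, contradicting the definition of $t_*$. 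Hence $t_*=-T$ and $u\equiv0$ on $\Omega\times(-T,t_0)$, so $u(\cdot,t_0)\equiv0$ by continuity. Finally, on $\Omega\times(t_0,+\infty)$ one invokes forward uniqueness for \eqref{heat_equation}: the weighted energy $E(t)=\int_\Omega u^2 e^{-2A|x|^2}\,d\gamma$ satisfies $E'(t)\le C\,E(t)$ — using again that $b$ is bounded, $c$ has linear growth \eqref{b_and_c_bounded}, and $u$ has $G(A,B(t))$-growth (which makes all weighted integrals finite and justifies the computation, with the convexity/Dirichlet condition controlling the boundary term) — so Grönwall and $E(t_0)=0$ give $u\equiv0$ for $t>t_0$. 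Therefore $u\equiv0$ on $\Omega\times(-T,+\infty)$.
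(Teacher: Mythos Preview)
Your Steps 1--2 and the backward propagation in Step 3 are the paper's argument: the almost-monotonicity of $N$ (the paper's Lemma~\ref{L:estimated_N(R)}), the polynomial lower bound on $H$ obtained by integrating $H'/H$, and the contradiction with the infinite-order upper bound (the paper's Lemma~\ref{L:vanishing_order_of_H(R)}) all match, as does the infimum-of-times propagation.

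The gap is your forward-uniqueness step. The claimed differential inequality $E'(t)\le C\,E(t)$ for $E(t)=\int_\Omega u^2\,e^{-2A|x|^2}\,d\gamma$ does not hold for general $A,L$. Write $w=e^{-2A|x|^2}$; integrating $2\int u\,L_\gamma u\,w\,d\gamma$ by parts produces, besides the good term $-2\int|\nabla u|^2 w\,d\gamma$, the drift contribution $8A\int u(x\cdot\nabla u)\,w\,d\gamma$, which after one more integration by parts equals $-4AnE+4A(4A+1)\int|x|^2u^2 w\,d\gamma$. The only available control on $\int|x|^2u^2 w\,d\gamma$ is the Gaussian Hardy-type bound $\int|x|^2u^2 w\,d\gamma\le \tfrac{1}{4A}\bigl(\int|\nabla u|^2 w\,d\gamma + nE\bigr)$, and feeding this back makes the coefficient in front of $\int|\nabla u|^2 w\,d\gamma$ equal to $-2+(4A+1)=4A-1$, which is \emph{positive} as soon as $A>\tfrac14$. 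The linearly growing $c$ adds a further positive contribution of order $L/A$ to that coefficient. Hence for generic $A$ and $L$ the Dirichlet energy cannot be absorbed and $E'\le CE$ fails; no choice of the fixed weight $e^{-2A'|x|^2}$ cures this simultaneously with keeping the integrals finite under $G(A,B)$-growth.

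The paper sidesteps this by re-using the frequency machinery for the forward direction: centering at a later base time $2S$ (with $u(\cdot,0)\equiv0$ already known), it shows from the same almost-monotonicity that $H_{2S}'(\tau)/H_{2S}(\tau)\ge -C_2 e^{-2\tau}/(1-e^{-2\tau})$, integrates on $[S,\tau]$ to get $H_{2S}(\tau)\ge c\,H_{2S}(S)>0$, and lets $\tau\to 2S$ to contradict $H_{2S}(2S)=0$. You should replace your Gr\"onwall step by this argument (or by a genuinely different forward-uniqueness mechanism valid without smallness of $A,L$).
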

\begin{remark}
    Note that when we choose $B(t)=e^{B_0(t+T+1)}$, the $G(A,B(t))$-growth of $u$ means
    \begin{equation*}
        \av{u(x,t)}\leq  e^{A{\av{x}^2}+B_0(t+T+1)}, \quad x\in \Omega,\ t\in(-T,+\infty).
    \end{equation*}
\end{remark}

Our framework is sufficiently robust to handle equations with time-independent potentials $V(x)$ exhibiting quadratic growth or certain singularities. Specifically, we establish the strong unique continuation for
\begin{equation}\label{heat_equation_potential}
        \begin{cases}
            \partial_t u(x,t)= L_\gamma u(x,t)-V(x)u(x,t),  &\qquad (x,t)\in \Omega\times(-T,+\infty),\\
            u(x,t)=0,&\qquad x\in \partial\Omega\times(-T,+\infty),
        \end{cases}
    \end{equation}
where $V(x)$ may grow like $O(|x|^2)$ or possess a singularity of order up to $|x|^{-2}$ at the origin, which is more general than \cite{Poon1996}*{Theorem 1.2}:
\begin{theorem}\label{T:main_theorem_V}
    Let $u$ be a solution of \eqref{heat_equation_potential} on $\Omega$ and $0\in\Omega$. Assume that $w\in C^1(\mathbb{S}^{n-1})$ with $\big\|w\big\|_{L^\infty(\mathbb{S}^{n-1})}\leq L$ for some $L>0$, and that $V(x)=v(\av{x})w(\frac{x}{\av{x}})$ satisfies one of the following conditions.
    ~\\
    (i) For any point $x\in\Omega$, $v(r)=v(\av{x})$ has no singular points and
    \begin{equation}\label{E:V_case_1}
        \av{rv^\prime(r)+2v(r)}\leq {1+r^2};
    \end{equation}
    (ii) $n\geq 3$, $v(r)=r^{-q}$ for some $q\in (0,2]$ and $w\big|_{\mathbb{S}^{n-1}}\geq 0$ when $q<2$.
    
    Suppose  there exist a positive constant $A>0$ and a continuous function $B(t)\in C((-T,+\infty))$ such that for every $t>-T$, $u(x,t)$ has $G(A,B(t))$-growth and $\av{\nabla u}\in  W^{1,2}\bracket{\Omega;e^{-2A\av{x}^2}d\gamma}$. If $u$ vanishes to infinite order in space-time at $(x_0,t_0)\in \Omega\times(-T,+\infty)$, then $u$ is identically zero.
    \end{theorem}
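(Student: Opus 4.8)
The plan is to run the parabolic frequency argument behind Theorems~\ref{T:theorem_1} and~\ref{main_theorem}, but with the potential \emph{absorbed into the weighted Dirichlet energy} rather than treated as a zeroth-order coefficient; this is forced on us, since $-V$ fails the linear-growth hypothesis \eqref{b_and_c_bounded} in both case (i) and case (ii). Let $\rho_\tau$ be the backward Mehler kernel centred at $(x_0,t_0)$ entering \eqref{equation_N}, and set
\[
 I(\tau)=\int_\Omega u^2\,\rho_\tau\,d\gamma,\qquad
 D(\tau)=\int_\Omega\bigl(|\nabla u|^2+V u^2\bigr)\,\rho_\tau\,d\gamma,
\]
so that $N(\tau)$ is the corresponding frequency \eqref{equation_N}. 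The first step is to check that these integrals, and their once-differentiated versions below, are finite: the $G(A,B(t))$-growth of $u$ and $|\nabla u|\in W^{1,2}(\Omega;e^{-2A|x|^2}d\gamma)$ control the contributions of $|\nabla u|^2$ and of $(1+|x|^2)u^2$ exactly as in the proof of Theorem~\ref{main_theorem}, provided $\tau<T_0$ so the Gaussian decay of $\rho_\tau$ beats $e^{A|x|^2}$. In case (ii) the factor $|x|^{-q}$ with $q\le2$ is locally integrable for $n\ge3$, and when $x_0=0$ the weighted Hardy inequality $\int_\Omega|x|^{-2}\varphi^2\rho_\tau\,d\gamma\le C\int_\Omega|\nabla\varphi|^2\rho_\tau\,d\gamma$ (valid for $n\ge3$) is what makes $D(\tau)$ and its derivative well-defined and controls the cross terms near the origin; when $x_0\neq0$ the singular locus of $V$ is separated from the centre of the weight and nothing new is needed.

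The core of the proof is the differentiation of $D$. Using $\partial_\tau u=L_\gamma u-Vu$ and the fact that $\rho_\tau$ solves the adjoint (backward) Mehler equation, the terms coming from $|\nabla u|^2$ reproduce verbatim the computation of Theorems~\ref{T:theorem_1}/\ref{main_theorem}; differentiating $\int_\Omega Vu^2\rho_\tau\,d\gamma$ and integrating by parts to move every derivative off $\rho_\tau$ (legitimate since $w\in C^1(\mathbb{S}^{n-1})$ and, in case (i), $v$ is everywhere regular so $V\in C^1$) leaves exactly one new contribution, proportional to
\[
 \int_\Omega \bigl(x\cdot\nabla V + 2V\bigr)\,u^2\,\rho_\tau\,d\gamma .
\]
Since the spherical gradient of $w$ is orthogonal to $x$, one has $x\cdot\nabla V+2V=\bigl(r v'(r)+2v(r)\bigr)\,w(x/|x|)$, so this term is pointwise bounded by $\|w\|_{L^\infty(\mathbb{S}^{n-1})}\,|r v'(r)+2v(r)|\,u^2$. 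In case (i), \eqref{E:V_case_1} gives the bound $L(1+|x|^2)u^2$, and the factor $1+|x|^2$ is absorbed by the Gaussian decay of $\rho_\tau$ on $0<\tau<T_0$ via the weighted second-moment estimate already used in Theorem~\ref{main_theorem}, producing an error of the form $C\,I(\tau)$. In case (ii), $r v'(r)+2v(r)=(2-q)r^{-q}$: when $q=2$ this vanishes identically, so there is no new term and the argument of Theorem~\ref{T:theorem_1} applies directly to $N$; when $q<2$ the quantity $(2-q)r^{-q}w\ge0$, so the new term carries a favourable sign and does not destroy monotonicity.

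Assembling these estimates yields the almost-monotonicity inequality $N'(\tau)\ge -C\bigl(1+N(\tau)\bigr)$ on $(0,T_0)$ (with $C=0$ when $q=2$), of the type already exploited for Theorem~\ref{main_theorem}: integrating shows $e^{C\tau}(1+N(\tau))$ is nondecreasing, so $N$ stays bounded on $(0,T_0)$. If $u$ vanished to infinite order at $(x_0,t_0)$ without being identically zero near it, then $N(\tau)\to\infty$ as $\tau\to0^+$, contradicting this bound; hence $u\equiv0$ on a backward space-time neighbourhood of $(x_0,t_0)$, and the connectedness/continuation argument concluding Theorem~\ref{main_theorem} — sliding the centre of the frequency and propagating forward via the equation — upgrades this to $u\equiv0$ on $\Omega\times(-T,+\infty)$. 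The main obstacle is precisely the $|x|$-dependent control of the potential error term: in case (i) one must genuinely use the Gaussian weight to convert the quadratic bound \eqref{E:V_case_1} into an $O(I(\tau))$ error (this is where $\tau<T_0$ is essential), while in case (ii) the $|x|^{-2}$ singularity forces the weighted Hardy inequality, hence the restriction $n\ge3$, and requires separate bookkeeping at the origin when $x_0\neq0$.
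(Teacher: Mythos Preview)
Your core computation matches the paper's: absorbing $V$ into the weighted Dirichlet energy so that the frequency derivative produces the single extra term $\int_\Omega(x\cdot\nabla V+2V)u^2\,M\,d\gamma$, then disposing of it via \eqref{E:V_case_1} together with the quadratic-moment estimate \eqref{E:quadratic_growth} in case~(i), and via the sign $(2-q)V\ge0$ (or $=0$ when $q=2$) in case~(ii). That part is exactly right.

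The gap is in your handling of general $x_0$. The backward Mehler kernel only satisfies $\partial_t M=-L_\gamma M$ when it is centred at the spatial origin: $L_\gamma=\Delta-x\cdot\nabla$ is not translation-invariant, so the identity \eqref{backward_equation} and the matrix equality \eqref{E:matrix Harnack estimate} fail for $M_{x_0,t_0}$ with $x_0\neq0$, and your differentiation of $D$ would acquire uncontrolled drift terms. If instead you translate first, you pick up a bounded first-order term $-x_0\cdot\nabla u$, but the potential becomes $V(x+x_0)$ and the clean identity $x\cdot\nabla V+2V=(rv'+2v)\,w$ is lost; in case~(ii) the resulting error $-x_0\cdot(\nabla V)(x+x_0)$ is of order $|x+x_0|^{-q-1}$ near the shifted singularity, so your assertion that ``nothing new is needed'' when $x_0\neq0$ does not hold.

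The paper sidesteps this entirely. It runs the frequency argument \emph{only} with the kernel centred at $0$, and uses it to show that $u$ cannot vanish to infinite order at $(0,t_0)$; hence $x_0\neq0$. At any such $x_0$ the potential lies in $L^\infty_{loc}$, so Lin's classical result \cite{Lin1990} gives $u(\cdot,t_0)\equiv0$ throughout $\Omega$. Forward-in-time vanishing then follows as in Theorem~\ref{main_theorem}, but backward-in-time propagation is obtained not by sliding the frequency centre (which would run into the same translation obstruction) but by the self-adjointness of $L_\gamma-V$ with respect to $d\gamma$: the quantity $D(\tau)=\int_\Omega u(x,\tau-2S)\,u(x,-\tau)\,d\gamma$ is constant in $\tau$, forcing $\int_\Omega u(x,-S)^2\,d\gamma=0$, and induction carries this back to $t>-T$.
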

\begin{remark}
    For the first case, $v(r)$ can be chosen to be 
    \begin{equation*}
        v(r) = c_0+\sum_{j=1}^{m}c_jr^{\alpha_j}+cr^2,
    \end{equation*}
    where $c,c_0,\ldots,c_{m}$ are constants and $0<\alpha_j<2$ for each $1\leq j\leq m$. Thus, $V$ is permitted to have quadratic growth, which matches the definition of $G(A,B)$-growth.
    For the second case, we allow $V$ to have singularity at $0$ with order no more than $2$. When $q=2$, the potential $V(x)=w(\frac{x}{\av{x}})/|x|^2$ does not need to be non-negative. The importance of the case $q=2$ is explained in \cite{Poon1996}.
\end{remark}

The paper is structured as follows. In Section \ref{sec2}, we review the essential background on Gaussian measures and define the parabolic frequency on Gaussian spaces with the backward Mehler kernel. Section \ref{sec3} is devoted to the almost-monotonicity property of the parabolic frequency. The proofs of our main unique continuation theorems are completed in Section \ref{sec4}.

\section{Parabolic Frequency on Gaussian Spaces}\label{sec2}
In this section, we establish the notation and recall fundamental properties of the Gaussian measure and the Mehler kernel that will be essential for our analysis. Based on this foundation, we develop the theory of parabolic frequency on Gaussian spaces. 

The Ornstein-Uhlenbeck operator $L_\gamma=\nabla^*\nabla$ is self-adjoint with respect to the Gaussian measure, where $\nabla^*=\nabla-x$. Indeed, for $\phi, \psi \in W^{1,2}\bracket{\R^n;d\gamma}$, integration by parts yields
$$
\int_{\R^n} \phi L_\gamma \psi \, d\gamma = -\int_{\R^n}  \nabla \phi\cdot \nabla \psi  \, d\gamma = \int_{\R^n} \psi L_\gamma \phi \, d\gamma.
$$

The Mehler kernel, which serves as the fundamental solution for the Ornstein-Uhlenbeck heat equation, is given by the following explicit formula:
\begin{equation}
    M_H(x,y,t):=\frac{1}{\bracket{1-e^{-2t}}^{{n}/{2}}}\exp\bracket{-\frac{1}{2}\frac{e^{-2t}(\av{x}^2+\av{y}^2)-2e^{-t}x\cdot y}{1-e^{-2t}}}.
\end{equation}

A fundamental property of the Mehler kernel is that it preserves the total Gaussian measure: for every fixed $t > 0$,
\begin{equation}
    \int_{\R^n} M_H(x,y,t)d\gamma(y) = 1.
\end{equation}

For our analysis, we employ the backward Mehler kernel, which is obtained by time reversal and choosing $y=0$. The backward Mehler kernel starting from the point $(x_0,t_0)$ is given by
\begin{equation}\label{E:backward_Mehler_Kernel_x_t}
    M_{x_0,t_0}(x,t):=\frac{1}{\bracket{1-e^{-2(t_0-t)}}^{{n}/{2}}}\exp\bracket{-\frac{1}{2}\frac{e^{-2(t_0-t)}(\av{x-x_0}^2)}{1-e^{-2(t_0-t)}}}.
\end{equation}

Throughout this paper, we focus on the case where $(x_0,t_0) = (0, 0)$ with $0\in\Omega$, which simplifies the backward Mehler kernel to
\begin{equation}\label{backwardMehlerkernel}
    M(x,t):=\frac{1}{\bracket{1-e^{2t}}^{-{n}/{2}}}\exp\bracket{-\frac{1}{2}\frac{e^{2t}\av{x}^2}{1-e^{2t}}}.
\end{equation}

The backward Mehler kernel satisfies the backward heat equation with respect to the Ornstein-Uhlenbeck operator:
\begin{equation}\label{backward_equation}
    \partial_t M = -L_\gamma M.
\end{equation}
This property will be crucial in deriving the monotonicity formula for the parabolic frequency. Note that there is a matrix equality for the backward Mehler kernel $M$.
\begin{equation}\label{E:matrix Harnack estimate}
    \nabla_i \nabla_j M - \frac{\nabla_i M \nabla_j M}{M} +  \frac{e^{2t}}{1-e^{2t}}M g_{ij}= 0,
\end{equation}
which differs from Hamilton's matrix Harnack estimate \cite{Hamilton1993} by the factor $1/2t$ replaced by ${e^{2t}}/({1-e^{2t}})$.

Following the approach pioneered by Poon \cite{Poon1996} and adapted to our Gaussian setting, we consider the weighted $L^2$ integral of $u$ with respect to the backward Mehler kernel. Let $\Omega\subset\R^n$ be a convex domain or $\Omega=\R^n$. Define the weighted $L^2$ integral of $u$ by
\begin{equation}\label{equation_H}
    H(\tau):=\int_{\Omega} u^2(x,-\tau)M(x,-\tau)d\gamma(x)
\end{equation}
and the weighted $L^2$ integral of $\nabla u$ by
\begin{equation}\label{equation_I}
    I(\tau):=\int_{\Omega} \av{\nabla u}^2(x,-\tau)M(x,-\tau)d\gamma(x)
\end{equation}
for $0 < \tau < T$.
Then we define the parabolic frequency on Gaussian spaces by
\begin{equation}\label{equation_N}
    N(\tau) := \bracket{1-e^{-2\tau}}\frac{I(\tau)}{H(\tau)}.
\end{equation}

\begin{remark}
    Notably, Colding and Minicozzi \cite{ColdingMinicozzi2022} explicitly observe that their general framework, when specialized to Euclidean space with Ornstein-Uhlenbeck drift, directly implies Poon's classical frequency monotonicity. They use the Euclidean parabolic frequency
    \begin{equation*}
        N(t) := \frac{\int_\Omega \av{\nabla u}^2d\gamma}{\int_\Omega u^2d\gamma}.
    \end{equation*}
    Although the drift Laplacian framework achieves remarkable universality applicable to all manifolds, it necessarily sacrifices the rich geometric structures intrinsic to Gaussian spaces. For example, Lemma \ref{L:vanishing_order_of_H(R)} will no longer hold without the Mehler kernel weight. Moreover, with this definition, one can only obtain backward uniqueness rather than strong unique continuation. In addition, to make integration by parts valid, the class of solutions $u$ of \eqref{heat_equation} considered in \cite{ColdingMinicozzi2022} is more restrictive than the space of functions with $G(A,B)$-growth. Thus, the Mehler kernel weighted integral represents a more natural and canonical choice in this setting.
\end{remark}

Here, we give an elementary inequality for the parabolic frequency on Gaussian spaces.

\begin{lemma}\label{L:Calculation_of_frequency}
    Assume that $u$ is a solution of \eqref{nonlinear_equation} with $u\in  W^{2,2}\bracket{\Omega;e^{-2A\av{x}^2}d\gamma}$. Let $T_0=\frac{1}{2}\min\{\log\bracket{1+\frac{1}{8(A+\pi)}},T\}$.  Then  for any positive $\tau<T_0$, the derivative of $N(\tau)$ satisfies
    \begin{equation}\label{E:N'}
        N^\prime(\tau)\geq-\frac{1-e^{-2\tau}}{2H(\tau)}\int_{\Omega} F(u,\nabla u)^2 M d\gamma.
    \end{equation}
\end{lemma}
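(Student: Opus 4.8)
The plan is to compute $N'(\tau)$ directly from the definition $N(\tau) = (1-e^{-2\tau}) I(\tau)/H(\tau)$ and identify the main identities for $H'$, $I'$, and the Cauchy–Schwarz-type inequality that forces the error term. First I would establish the differentiation formula for $H$. Using $\partial_t M = -L_\gamma M$, the equation $\partial_t u = L_\gamma u - F(u,\nabla u)$, and self-adjointness of $L_\gamma$ with respect to $d\gamma$ (valid here because the $W^{2,2}$-weighted assumption together with the Mehler-kernel weight guarantees the boundary/decay terms in integration by parts vanish — note $u=0$ on $\partial\Omega$ handles the boundary, and convexity of $\Omega$ is used to control the boundary sign term), I expect
\begin{equation*}
    H'(\tau) = -\frac{d}{d\tau}\int_\Omega u^2 M \, d\gamma = -2I(\tau) + 2\int_\Omega u\, F(u,\nabla u)\, M\, d\gamma,
\end{equation*}
after the $\partial_t M$ and $L_\gamma u$ contributions combine via integration by parts (the $x\cdot\nabla$ part of $L_\gamma$ pairs with the $e^{-|x|^2/2}$ in $d\gamma$ to produce the clean $\nabla^*\nabla$ structure). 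Introducing the shorthand $D(\tau) = I(\tau) - \int_\Omega u F(u,\nabla u) M\, d\gamma$, this reads $H' = -2D$.

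Next I would differentiate $I(\tau) = \int_\Omega |\nabla u|^2 M\, d\gamma$. This is the more delicate computation: differentiating under the integral, commuting $\nabla$ with $\partial_t$, using $\partial_t M = -L_\gamma M$ again, integrating by parts, and crucially invoking the matrix identity \eqref{E:matrix Harnack estimate} for $M$ — the term $\nabla_i\nabla_j M - \nabla_i M \nabla_j M / M + \frac{e^{2t}}{1-e^{2t}} M g_{ij} = 0$ is exactly what makes the Hessian terms collapse. I anticipate this yields something of the form
\begin{equation*}
    I'(\tau) = -\frac{2e^{-2\tau}}{1-e^{-2\tau}} I(\tau) - 2\int_\Omega |L_\gamma u|^2 M\, d\gamma + 2\int_\Omega L_\gamma u\, F(u,\nabla u)\, M\, d\gamma + (\text{lower order}),
\end{equation*}
so that, combining with $\partial_t u = L_\gamma u - F$, the bracket $\int_\Omega (\partial_t u)^2 M\, d\gamma$ and the cross terms reorganize. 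The upshot I am aiming for is that $(1-e^{-2\tau}) I'(\tau) + e^{-2\tau} I(\tau)$ equals $-2\int_\Omega u_t^2 M\,d\gamma$ plus an $F$-cross-term, where $u_t = L_\gamma u - F$; equivalently, after substituting, one gets a ``Poon-type'' identity expressing $\frac{d}{d\tau}[(1-e^{-2\tau})I]$ in terms of $-2\int (u_t + \text{correction})^2 M\, d\gamma$ modulo an explicit $F^2$ remainder.

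Then the frequency derivative assembles as
\begin{equation*}
    N'(\tau) = \frac{(1-e^{-2\tau})I' + 2e^{-2\tau} I}{H} - \frac{(1-e^{-2\tau}) I\, H'}{H^2},
\end{equation*}
and substituting $H' = -2D$ and the identity for $(1-e^{-2\tau})I' + 2e^{-2\tau}I$ should produce $N'(\tau) = \frac{2(1-e^{-2\tau})}{H^2}\left[ H \cdot \mathcal{E} - D^2 \right] + (\text{the }F^2\text{ remainder})$, where $\mathcal{E} = \int_\Omega u_t^2 M\, d\gamma$ (or the appropriate corrected quantity) and $D = \int_\Omega u\, u_t\, M\, d\gamma$ after absorbing $I$ via the $H'$ formula. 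By Cauchy–Schwarz with respect to the positive measure $M\, d\gamma$, $D^2 \le H \cdot \mathcal{E}$, so the bracketed term is nonnegative; the only genuinely negative contribution is the piece coming from completing the square against $F$, which is bounded below by $-\frac{1-e^{-2\tau}}{2H}\int_\Omega F^2 M\, d\gamma$. This yields \eqref{E:N'}.

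The main obstacle will be justifying all the integrations by parts and the differentiation under the integral sign in the weighted, possibly unbounded setting: one must verify that the boundary terms at $\partial\Omega$ vanish (Dirichlet condition plus the sign condition from convexity, i.e. the second fundamental form of $\partial\Omega$ has the right sign, killing the term $\int_{\partial\Omega} |\nabla u|^2 \langle \nu, \cdot\rangle M$), and that the decay of $u, \nabla u, \nabla^2 u$ against the weight $M\, d\gamma$ — guaranteed by the $W^{2,2}(\Omega; e^{-2A|x|^2} d\gamma)$ hypothesis combined with the restriction $\tau < T_0$, which ensures the Gaussian $M\, d\gamma$ decays fast enough (like $e^{-c|x|^2}$ with $c > 2A$) to dominate the $e^{2A|x|^2}$ growth — makes every integral finite and every ``integration by parts at infinity'' legitimate. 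The choice $T_0 = \frac12\min\{\log(1 + \frac{1}{8(A+\pi)}), T\}$ is precisely calibrated so that $\frac{e^{2t}}{1-e^{2t}}$ stays comparable to a constant and the combined exponent in $M\, d\gamma \cdot e^{2A|x|^2}$ remains negative-definite; I would track this constant carefully. The algebraic identity for $I'$ via the matrix equality \eqref{E:matrix Harnack estimate} is computationally heavy but mechanical once the convergence is in hand.
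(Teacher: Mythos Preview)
Your plan is the paper's argument: differentiate $H$ and $I$ using $\partial_t M=-L_\gamma M$ and the matrix identity \eqref{E:matrix Harnack estimate}, complete the square in the quantity $u_t+\nabla u\cdot\nabla M/M-\tfrac12 F$ (this is your unnamed ``corrected quantity''), and finish with Cauchy--Schwarz against $u$ in $L^2(M\,d\gamma)$. Two small fixes to watch when you execute: the sign is $H'(\tau)=+2I(\tau)+2\int_\Omega uF\,M\,d\gamma$ (and correspondingly $I'$ carries a $+2\int(\cdots)^2$), and the boundary term is disposed of via $x\cdot\nu\ge 0$ on $\partial\Omega$ (convexity together with $0\in\Omega$, since $\nabla M=-\tfrac{e^{-2\tau}}{1-e^{-2\tau}}\,xM$), not the second fundamental form.
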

\begin{proof}
For any $\tau<T_0$, we have
\begin{align*}
    \exp\bracket{2A\av{x}^2-\frac{1}{2}\frac{e^{-2\tau}\av{x}^2}{1-e^{-2\tau}}}\leq 1. 
\end{align*}
Together with the assumption that $u\in  W^{2,2}\bracket{\Omega;e^{-2A\av{x}^2}d\gamma}$, it follows that  $H(\tau),I(\tau)$ and $N(\tau)$ are well-defined and subsequent integration by parts is justified.
   
   If $\Omega=\R^n$, then together with equations \eqref{nonlinear_equation}, we obtain
    \begin{align}\label{E:equation_I_computation}
        I(\tau)&=-\int_{\R^n} u\bracket{L_\gamma u+\nabla u\cdot\frac{\nabla M}{M}}Md\gamma\\
        &=-\int_{\R^n} u\bracket{u_t-F(u,\nabla u)+\nabla u\cdot\frac{\nabla M}{M}}Md\gamma\notag.
    \end{align}
    Computing the derivative of $H(\tau)$ with equation \eqref{backward_equation}, we find
    \begin{align}\label{E:equation_H'}
        H^\prime(\tau)&=-\int_{\R^n} \bracket{2uu_tM+u^2M_t}d\gamma\\
        &=-\int_{\R^n} \bracket{2uu_tM-u^2L_\gamma M}d\gamma\notag\\
        &=-2\int_{\R^n} u\bracket{u_t+\nabla u\cdot\frac{\nabla M}{M}}Md\gamma.\notag
    \end{align}
    Similarly, computing $I'(\tau)$ with equations \eqref{nonlinear_equation},  \eqref{backward_equation} and \eqref{E:matrix Harnack estimate}, we derive
    \begin{align*}
        I^\prime(\tau)&=-\int_{\R^n} (2\nabla u \cdot \nabla u_t M + |\nabla u|^2 M_t) d\gamma \notag\\
        &=-\int_{\R^n} (2\nabla u \cdot \nabla u_t M - |\nabla u|^2 L_\gamma M) d\gamma \notag\\
        &=2\int_{\R^n} \bracket{u_t\bracket{L_\gamma u+\nabla u\cdot\frac{\nabla M}{M}}M - \nabla_jM\nabla_j\nabla_i u\nabla_i u} d\gamma \notag\\
        &=2\int_{\R^n} u_t\bracket{L_\gamma u+\nabla u\cdot\frac{\nabla M}{M}}Md\gamma+2\int_{\R^n} \nabla_i\nabla_j M\nabla_i u\nabla_ju d\gamma\\
        &\quad+2\int_{\R^n} L_\gamma u \nabla u \cdot \nabla Md\gamma\\
         &=2\int_{\R^n} u_t\left(u_t-F+\nabla u\cdot\frac{\nabla M}{M}\right)Md\gamma+2\int_{\R^n}\frac{(\nabla u\cdot \nabla M)^2}{M} d\gamma\\
        &\quad-2\frac{e^{-2\tau}}{1-e^{-2\tau}}I(\tau)+2\int_{\R^n} \left(u_t-F\right) \nabla u \cdot \nabla Md\gamma\\
        &=2\int_{\R^n} \left(u_t+\frac{{\nabla u \cdot \nabla M}}{M}-\frac{1}{2}F\right)^2 Md\gamma-\frac{1}{2}\int_{\R^n}F^2 Md\gamma-2\frac{e^{-2\tau}}{1-e^{-2\tau}}I(\tau).
    \end{align*}
    Note that 
    \begin{align*}
        I(\tau)H^\prime(\tau)=&2\left(\int_{\R^n} u\big(u_t+\nabla u\cdot\frac{\nabla M}{M}-\frac{1}{2}F\big)Md\gamma\right)^2
         -\frac{1}{2}\left(\int_{\R^n} FuMd\gamma\right)^2,
    \end{align*}
    therefore, we compute   
    \begin{align*}
       \frac{H(\tau)^2}{1-e^{-2\tau}} N^\prime(\tau)&=H(\tau)I^\prime(\tau)-I(\tau)H^\prime(\tau)+\frac{2e^{-2\tau}}{1-e^{-2\tau}}I(\tau)H(\tau)\\
        &=2\int_{\R^n} \bracket{{u_t}+\frac{{\nabla u\cdot\nabla M}}{M}-\frac{1}{2}F}^2Md\gamma\int_{\R^n} u^2Md\gamma.\\
        &-2\left(\int_{\R^n} u\big(u_t+\nabla u\cdot\frac{\nabla M}{M}-\frac{1}{2}F\big)Md\gamma\right)^2\\
        &-\frac{1}{2}\int_{\R^n}F^2Md\gamma\int_{\R^n} u^2Md\gamma+\frac{1}{2}\bracket{\int_{\R^n} FuMd\gamma}^2\\
        &\geq -\frac{1}{2}\int_{\R^n}F^2Md\gamma \int_{\R^n} u^2Md\gamma+\frac{1}{2}\bracket{\int_{\R^n} FuMd\gamma}^2,
    \end{align*}
    where we used  H\"{o}lder's inequality in the inequality. Hence we have
    \begin{align*}
       \frac{H(\tau)^2}{1-e^{-2\tau}} N^\prime(\tau)\ge   -\frac{1}{2}H(\tau) \int_{\R^n}F^2Md\gamma, 
    \end{align*}
proving \eqref{E:N'}.

 If $\Omega$ is a convex domain with boundary, let $d\gamma_\sigma:=e^{-\frac{\av{x}^2}{2}}d\sigma$ denote the Gaussian-weighted surface measure on $\partial\Omega$ and $\nu$ denote the unit outward normal on $\partial\Omega$. Then similar calculations yield
 \begin{align*}
    I(\tau)=-\int_{\Omega} u\bracket{u_t-F+\nabla u\cdot\frac{\nabla M}{M}}Md\gamma+\int_{\partial\Omega}u\partiald{u}{\nu} Md\gamma_\sigma, 
 \end{align*}
 and 
 \begin{align*}
     H^\prime(\tau)=-2\int_{\Omega} u\bracket{u_t+\nabla u\cdot\frac{\nabla M}{M}}Md\gamma+\int_{\partial\Omega}u^2\partiald{M}{\nu}d\gamma_\sigma,
 \end{align*}
 and
    \begin{align*}
       I^\prime(\tau)=&2\int_{\Omega} \left(\bracket{u_t+\frac{{\nabla u \cdot \nabla M}}{M}-\frac{1}{2}F}^2-\frac{1}{4}F^2\right)M d\gamma-\frac{2e^{-2\tau}}{1-e^{-2\tau}}I(\tau) \\
        &+\int_{\partial\Omega}\bracket{\av{\nabla u}^2\partiald{M}{\nu}-2u_t\partiald{u}{\nu} M-2\nabla u\cdot{\nabla M}\partiald{u}{\nu}}d\gamma_\sigma.
    \end{align*}
    Since $u(x,t)=0$ for all $x\in\partial\Omega$, we know that $u_t(x,t)=0$ and ${\nabla u(x,t)}=(\partial u/\partial \nu)\nu$  for all $x\in\partial\Omega$. Thus,
    \begin{align*}
        \int_{\partial\Omega}\bracket{\av{\nabla u}^2\partiald{M}{\nu}-2u_t\partiald{u}{\nu} M-2\nabla u\cdot{\nabla M}\partiald{u}{\nu}}d\gamma_\sigma=\int_{\partial\Omega}\av{\nabla u}^2\frac{(x\cdot\nu)e^{-2\tau}}{1-e^{-2\tau}}Md\gamma_\sigma,
    \end{align*}
    where we used the fact
    \begin{equation*}
        \partiald{M}{\nu}=\nabla M\cdot\nu=-\frac{(x\cdot\nu)e^{-2\tau}}{1-e^{-2\tau}}M.
    \end{equation*}
    Since $\Omega$ is convex, we obtain $x\cdot\nu\geq 0$ on $\partial\Omega$. Thus, \eqref{E:N'} also holds when $\Omega$ is a convex domain with boundary.
\end{proof}

Now it is easy to see that Theorem \ref{T:theorem_1} holds as a corollary of Lemma \ref{L:Calculation_of_frequency}.
\begin{proof}[Proof of Theorem \ref{T:theorem_1}]
    For the solution of \eqref{heat_equation}, $F(u,\nabla u)=0$. Thus, the inequality \eqref{E:N'} gives the monotonicity of $N(\tau)$.
\end{proof}
\begin{remark}
    For any $s>-T$, we define the parabolic frequency starting from  $t = s$ by
    \begin{equation}\label{frequency_s}
        N_s(\tau) := \bracket{1-e^{-2\tau}}\frac{\int_{\Omega} \av{\nabla u}^2(x,s-\tau)M(x,-\tau)d\gamma(x)}{\int_{\Omega} {u}^2(x,s-\tau)M(x,-\tau)d\gamma(x)}.
    \end{equation}
    The same calculation gives that if $u$ satisfies the assumption of Theorem \ref{T:theorem_1}, then for any $s>-T$, $N_s(\tau)$ increases monotonically in the interval $(0,\min\{T_0(A),T+s\})$.
\end{remark}

\section{Almost-monotonicity of Parabolic Frequency}\label{sec3}

In this section, we establish the almost-monotonicity property of the parabolic frequency.

First, we have the following lemma, which is used to handle the growth and singularity of the coefficient $c$.  Inequality \eqref{E:singularity_at_a}, a Hardy-type inequality, is an analog of Poon \cite{Poon1996}*{Proposition 3.1} in the setting of Gaussian spaces.
\begin{lemma}\label{L:Linear_growth_and_Singularity}
    Assume that $u:\Omega\times(-T,+\infty)\rightarrow\R$ satisfies $u\in  W^{1,2}\bracket{\Omega;e^{-2A\av{x}^2}d\gamma}$ for every $t>-T$ and $u(x,t)=0$ for any $(x,t)\in\partial\Omega\times(-T,+\infty)$. Then for any $0<\tau<T$, 
    \begin{equation}\label{E:quadratic_growth}
        \int_{\Omega\times\{t=-\tau\}} \av{x}^2u^2Md\gamma
        \leq\frac{1-e^{-2\tau}}{e^{-2\tau}}\bracket{n\int_{\Omega\times\{t=-\tau\}} u^2Md\gamma+\int_{\Omega\times\{t=-\tau\}} \av{\nabla u}^2Md\gamma}.
    \end{equation}
    Moreover, when $n\geq 3$, for any  $0<\tau<T$, 
    \begin{align}\label{E:singularity_at_a}
        \int_{\Omega\times\{t=-\tau\}} \frac{u^2}{\av{x}^2}Md\gamma
        \leq & \frac{2}{(n-2)(1-e^{-2\tau})}\int_{\Omega} u^2Md\gamma+\frac
        {4}{(n-2)^2}\int_{\Omega} \av{\nabla u}^2Md\gamma.
    \end{align}
\end{lemma}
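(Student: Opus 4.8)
The plan is to derive both inequalities from integration by parts against cleverly chosen vector fields, exploiting the explicit form of the backward Mehler kernel $M$ and its logarithmic gradient. Recall from \eqref{backwardMehlerkernel} that
\begin{equation*}
    \frac{\nabla M}{M} = -\frac{e^{-2\tau}}{1-e^{-2\tau}}\,x \quad \text{on } \{t=-\tau\},
\end{equation*}
so the coefficient $\frac{e^{-2\tau}}{1-e^{-2\tau}}$ appearing in both right-hand sides is exactly this proportionality constant; write $\mu := \frac{e^{-2\tau}}{1-e^{-2\tau}}$ for brevity. The key identity to set up is the ``integration by parts in the radial/linear direction'' formula: for a suitable vector field $X$,
\begin{equation*}
    \int_\Omega u^2 \,(\operatorname{div}(X M d\gamma)) = -\int_\Omega 2u\,\nabla u\cdot X\, M\,d\gamma + \text{(boundary term)},
\end{equation*}
where the boundary term over $\partial\Omega$ vanishes because $u=0$ there. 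Since $\operatorname{div}(XMd\gamma)$ expands via $\operatorname{div} X + X\cdot\frac{\nabla M}{M} + X\cdot\frac{\nabla(d\gamma)}{d\gamma} = \operatorname{div} X - \mu\, X\cdot x - X\cdot x$, choosing $X = x$ gives $\operatorname{div} X = n$ and $X\cdot x = |x|^2$, producing a term $n - (\mu+1)|x|^2$; this is the engine for \eqref{E:quadratic_growth}.

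For \eqref{E:quadratic_growth}: take $X = x$. The identity above yields
\begin{equation*}
    \int_\Omega \big(n - (\mu+1)|x|^2\big) u^2 M\,d\gamma = -2\int_\Omega u\,(x\cdot\nabla u)\,M\,d\gamma.
\end{equation*}
Rearranging, $(\mu+1)\int |x|^2 u^2 M\,d\gamma = n\int u^2 M\,d\gamma + 2\int u(x\cdot\nabla u)M\,d\gamma$. I would then bound the cross term by Cauchy--Schwarz with a weight tuned so that the $\int|x|^2u^2 M$ piece it generates can be absorbed: $2|u||x\cdot\nabla u| \le \delta |x|^2 u^2 + \delta^{-1}|\nabla u|^2$, and pick $\delta$ (e.g.\ $\delta = 1$, or optimized) so that after absorbing, the coefficient in front of $\int|x|^2u^2M\,d\gamma$ on the left is at least $\mu$. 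Since $\mu = e^{-2\tau}/(1-e^{-2\tau})$, the stated constant $\frac{1-e^{-2\tau}}{e^{-2\tau}} = \mu^{-1}$ appears after dividing through; one checks the choice $\delta=1$ gives exactly $(\mu+1) - 1 = \mu$ on the left, hence $\mu \int|x|^2u^2M\,d\gamma \le n\int u^2 M\,d\gamma + \int|\nabla u|^2 M\,d\gamma$, which is precisely \eqref{E:quadratic_growth}.

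For \eqref{E:singularity_at_a} (with $n\ge 3$): take the vector field $X = x/|x|^2$, which has $\operatorname{div} X = (n-2)/|x|^2$ and $X\cdot x = 1$. The same divergence identity gives
\begin{equation*}
    \int_\Omega \Big(\frac{n-2}{|x|^2} - (\mu+1)\Big) u^2 M\,d\gamma = -2\int_\Omega \frac{u\,(x\cdot\nabla u)}{|x|^2}\,M\,d\gamma,
\end{equation*}
so $(n-2)\int \frac{u^2}{|x|^2}M\,d\gamma = (\mu+1)\int u^2 M\,d\gamma + 2\int \frac{u(x\cdot\nabla u)}{|x|^2}M\,d\gamma \le (\mu+1)\int u^2 M\,d\gamma + \frac{n-2}{2}\int\frac{u^2}{|x|^2}M\,d\gamma + \frac{2}{n-2}\int\frac{|x\cdot\nabla u|^2}{|x|^2}M\,d\gamma$, using Young's inequality with the weight $\frac{n-2}{2}$ chosen to absorb half of the left side. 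Since $|x\cdot\nabla u|^2/|x|^2 \le |\nabla u|^2$, absorbing and using $\mu+1 = 1/(1-e^{-2\tau})$ gives $\frac{n-2}{2}\int\frac{u^2}{|x|^2}M\,d\gamma \le \frac{1}{1-e^{-2\tau}}\int u^2 M\,d\gamma + \frac{2}{n-2}\int|\nabla u|^2 M\,d\gamma$, and dividing by $(n-2)/2$ yields \eqref{E:singularity_at_a}.

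\textbf{Main obstacle.} The genuine technical point is justifying the integration by parts, i.e.\ that the boundary contributions at $\partial\Omega$ vanish (immediate from $u=0$ and convexity is not even needed here, only the Dirichlet condition), at infinity vanish, and that all integrals are finite. Finiteness of $\int|x|^2 u^2 M\,d\gamma$ and $\int u^2/|x|^2\,M\,d\gamma$ must be established first (the latter using the $W^{1,2}$ hypothesis together with the standard Hardy inequality near the origin, valid for $n\ge 3$) so that the absorption arguments are not vacuous; the decay of $M$ relative to the $e^{-2A|x|^2}d\gamma$ weight — exactly the computation $\exp(2A|x|^2 - \frac12\mu|x|^2)\le 1$ for $\tau < T_0$ used in Lemma \ref{L:Calculation_of_frequency} — controls the behavior at infinity and legitimizes discarding the far-field boundary term. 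I would state these finiteness and decay facts at the outset, then the two divergence identities are routine.
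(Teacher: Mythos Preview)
Your proposal is correct and is essentially the paper's own argument: both inequalities come from integrating by parts against the vector fields $X=x$ and $X=x/|x|^2$ in the weighted measure $M\,d\gamma$, then absorbing the cross term via $2|u\,x\cdot\nabla u|\le |x|^2u^2+|\nabla u|^2$ (resp.\ the $(n-2)/2$--weighted Young inequality). The only cosmetic difference is that you package the computation as a single divergence identity, whereas the paper writes each integration by parts explicitly; note also a harmless sign slip in your second identity (the cross term should carry a minus sign, but since you bound its absolute value the conclusion is unaffected).
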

\begin{proof}
Using the boundary condition that $u(x,t)=0$ for any $x\in\partial\Omega\times(-T,+\infty)$ and the expression of $M$ given in \eqref{backwardMehlerkernel},  for any $\tau>0$  we have 
    \begin{align*}
        \int_{\Omega\times\{t=-\tau\}} \av{x}^2u^2Md\gamma 
        &=  -\frac{1-e^{-2\tau}}{e^{-2\tau}}\int_{\Omega\times\{t=-\tau\}}u^2x\cdot\nabla Md\gamma\\
        &= \frac{1-e^{-2\tau}}{e^{-2\tau}}{\int_{\Omega\times\{t=-\tau\}} u(2x\cdot\nabla u+nu-\av{x}^2u)Md\gamma}\\
        &\leq  \frac{1-e^{-2\tau}}{e^{-2\tau}}\bracket{n\int_{\Omega\times\{t=-\tau\}} u^2Md\gamma+\int_{\Omega\times\{t=-\tau\}} \av{\nabla u}^2Md\gamma},
    \end{align*}
    where in the above inequality we used the fact  $$2\av{\bracket{x\cdot\nabla u} u}\le |x|^2u^2+|\nabla u|^2.$$
    
    Similarly, combining with equation \eqref{backwardMehlerkernel}, when $n\geq 3$, for any $\tau>0$, we have
    \begin{align*}
        \int_{\Omega\times\{t=-\tau\}} {u^2}Md\gamma
        &= \int_{\Omega\times\{t=-\tau\}} \frac{u^2}{\av{x}^2}x\cdot\bracket{-\frac{1-e^{-2\tau}}{e^{-2\tau}}\nabla M}d\gamma\\
        &=\frac{1-e^{-2\tau}}{e^{-2\tau}}\int_{\Omega\times\{t=-\tau\}} \bracket{\frac{(n-2)u^2}{\av{x}^2}+2u\nabla u\cdot\frac{x}{\av{x}^2}-u^2}Md\gamma\\
        &\geq  \frac{1-e^{-2\tau}}{e^{-2\tau}}\int_{\Omega\times\{t=-\tau\}} \bracket{\frac{(n-2)u^2}{2\av{x}^2}-\frac{2}{n-2}\av{\nabla u}^2-u^2}Md\gamma,
    \end{align*}
    which implies the inequality \eqref{E:singularity_at_a}.
\end{proof}

The following lemma establishes the key almost-monotonicity property of the parabolic frequency. 

\begin{lemma}\label{L:estimated_N(R)}
    Assume that $u$ is a solution of \eqref{heat_equation} with $u\in  W^{2,2}\bracket{\Omega;e^{-2A\av{x}^2}d\gamma}$. Assume that $b$ and $c$ satisfy \eqref{b_and_c_bounded}. Let $T_0$ be the constant given by Lemma  \ref{L:Calculation_of_frequency}. Then for any $0<\tau<\tau_0<T_0/2$,
    \begin{equation}\label{estimated_N(R)}
        N(\tau)+1 \leq C\bracket{N(\tau_0)+1},
    \end{equation}
    where the constant C depends on $L$ and $n$.
\end{lemma}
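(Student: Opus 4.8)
The plan is to convert the differential inequality for $N$ provided by Lemma \ref{L:Calculation_of_frequency} into an exponential (Gronwall-type) comparison. Since \eqref{heat_equation} is the special case of \eqref{nonlinear_equation} with $F(u,\nabla u)=-(b\cdot\nabla u+cu)$, Lemma \ref{L:Calculation_of_frequency} gives, for every $0<\tau<T_0$,
\[
N'(\tau)\geq-\frac{1-e^{-2\tau}}{2H(\tau)}\int_{\Omega\times\{t=-\tau\}}(b\cdot\nabla u+cu)^2\,M\,d\gamma .
\]
Moreover $I(\tau)\geq0$ and $H(\tau)>0$ (the latter being part of $N$ being well defined), so $N(\tau)\geq0$. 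Everything then reduces to bounding the right-hand side by a constant multiple of $N(\tau)+1$.

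First I would estimate the integral. Using $(b\cdot\nabla u+cu)^2\leq2|b|^2|\nabla u|^2+2c^2u^2$ together with $|b|\leq L$ and $c^2\leq L^2(1+|x|)^2\leq2L^2(1+|x|^2)$ from \eqref{b_and_c_bounded}, the integral is at most $2L^2I(\tau)+4L^2H(\tau)+4L^2\int|x|^2u^2M\,d\gamma$. The crucial move is to absorb the last term via the Gaussian Hardy-type inequality \eqref{E:quadratic_growth} of Lemma \ref{L:Linear_growth_and_Singularity}, which bounds $\int|x|^2u^2M\,d\gamma$ by $\frac{1-e^{-2\tau}}{e^{-2\tau}}(nH(\tau)+I(\tau))$. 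Since $T_0\leq\frac12\log(1+\frac{1}{8(A+\pi)})<\frac12\log2$, we have $\frac{1-e^{-2\tau}}{e^{-2\tau}}=e^{2\tau}-1\leq1$ on $(0,T_0)$, so the integral is $\leq C_1(I(\tau)+H(\tau))$ with $C_1=4(n+1)L^2$, a constant depending only on $L$ and $n$. Feeding this back and using $N(\tau)=(1-e^{-2\tau})I(\tau)/H(\tau)$ yields
\[
N'(\tau)\geq-\frac{C_1}{2}\Bigl(N(\tau)+(1-e^{-2\tau})\Bigr)\geq-\frac{C_1}{2}\bigl(N(\tau)+1\bigr).
\]

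Finally, put $g(\tau):=N(\tau)+1\geq1$; then $g'\geq-\frac{C_1}{2}g$, so $e^{C_1\tau/2}g(\tau)$ is nondecreasing on $(0,T_0)$, and for $0<\tau<\tau_0<T_0/2$ we obtain $g(\tau)\leq e^{C_1(\tau_0-\tau)/2}g(\tau_0)\leq e^{C_1T_0/4}g(\tau_0)$. Because $T_0<\frac12\log2<1$, the exponent is bounded by a constant depending only on $L$ and $n$, which gives \eqref{estimated_N(R)} with $C=e^{C_1T_0/4}\leq e^{(n+1)L^2}$. The one genuinely delicate step is the integral estimate: the zeroth-order coefficient $c$ is unbounded, and it is precisely the backward Mehler kernel weight together with the Gaussian Hardy inequality \eqref{E:quadratic_growth} that lets us trade the extra factor $|x|^2$ for $I(\tau)$ and $H(\tau)$; the rest is elementary. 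One should also keep the standing regularity in mind so that $N$ is absolutely continuous and the differential inequality may legitimately be integrated, and note that $N(\tau)$ presupposes $H(\tau)>0$, the case $H\equiv0$ (hence $u\equiv0$) being trivial.
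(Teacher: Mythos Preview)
Your proposal is correct and follows essentially the same route as the paper: bound $\int F^2 M\,d\gamma$ by $C(I(\tau)+H(\tau))$ using the linear-growth hypothesis on $c$ together with the Gaussian Hardy inequality \eqref{E:quadratic_growth}, deduce $N'(\tau)\geq -C(N(\tau)+1)$, and integrate (Gronwall) over $(\tau,\tau_0)\subset(0,T_0/2)$. The only cosmetic differences are in the constants (you keep the factor $2$ from $(1+|x|)^2\leq2(1+|x|^2)$, arriving at $C_1=4(n+1)L^2$, whereas the paper writes $2(n+1)L^2$), and you add the harmless remarks about $H>0$ and absolute continuity of $N$; the final bound $C\leq e^{(n+1)L^2}$ matches.
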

\begin{proof}
Note that for any $0<\tau<T_0/2<1/8$, $e^{2\tau}<2$. Since $F(u,\nabla u)=b\cdot\nabla u+cu$, with the inequality \eqref{b_and_c_bounded} and Lemma \ref{L:Linear_growth_and_Singularity}, we have
\begin{align}\label{E:F(u,gradient_u)_estimate}
    \int_{\Omega\times\{t=-\tau\}} F(u,\nabla u)^2Md\gamma
    &\leq 2\int_{\Omega\times\{t=-\tau\}} \av{b}^2\av{\nabla u}^2Md\gamma+2\int_{\Omega\times\{t=-\tau\}} \av{c}^2\av{u}^2Md\gamma\\
    &\leq 2L^2 I(\tau) + 2L^2\int_{\Omega} \bracket{1+\av{x}^2}\av{u}^2Md\gamma\notag\\
    &\leq 2(n+1)L^2\bracket{I(\tau)+{H(\tau)}}\notag.
\end{align}
Combining the inequality \eqref{E:N'}, we obtain
    \begin{equation*}
        N^\prime(\tau)
        \geq-\frac{1-e^{-2\tau}}{2H(\tau)}\int_{\Omega} F(u,\nabla u)^2 M d\gamma
        \geq-C\bracket{N(\tau)+1},
    \end{equation*}
    where the constant $C=(n+1)L^2$. 
    Integrating this differential inequality gives
    \begin{equation*}
        \log\bracket{\frac{N(\tau_0)+1}{N(\tau)+1}}\geq -C,
    \end{equation*}
    which  yields the desired result \eqref{estimated_N(R)}.
\end{proof}

\section{Unique Continuation for Parabolic Equations}\label{sec4}

In this section, we establish our main unique continuation results.
The following lemma provides a crucial polynomial growth estimate for the term $H(\tau)$ of solutions that vanish to high order, generalizing the classical result of Poon \cite{Poon1996}  to the Gaussian setting.
\begin{lemma}\label{L:vanishing_order_of_H(R)}
    Let $v:\Omega\times(-T,+\infty)$ be a function with $G(A,B(t))$-growth for some constant $A>0$ and some continuous function $B(t)\in C((-T,+\infty))$. Suppose that there exist a positive constant $T_1<T$ and an integer $K > 0$ such that
    \begin{equation*}
        \av{v(x,t)}\leq C_0\bracket{\av{x}^{2K}+\av{t}^{K}},
    \end{equation*}
  whenever $\av{x}^{2}+\av{t}\leq T_1$. 
    Then there exists a constant $T_0=\frac{1}{2}\min\{\log\bracket{1+\frac{1}{8(A+\pi)}},T_1\}$, such that for all positive $\tau<T_0$,
    \begin{equation}
        \int_{\Omega}v^2(x,-\tau)M(x,-\tau)d\gamma(x)\leq C_1\tau^{2K},
    \end{equation}
where the constant $C_1$  depends on $n$, $K$, $\aV{B}_{L^{\infty}([-T_0,0])}$ and $C_0$.

\end{lemma}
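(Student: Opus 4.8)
The plan is to split the integral $\int_\Omega v^2(x,-\tau)M(x,-\tau)\,d\gamma$ into a near region $\{|x|^2 \le T_1\}$, where the vanishing hypothesis $|v(x,t)|\le C_0(|x|^{2K}+|t|^K)$ applies, and a far region $\{|x|^2 > T_1\}$, where only the $G(A,B(t))$-growth is available. For the far region the key observation is the choice of $T_0 = \tfrac12\min\{\log(1+\tfrac1{8(A+\pi)}),T_1\}$: for $\tau < T_0$ one checks, as in the proof of Lemma~\ref{L:Calculation_of_frequency}, that $2A|x|^2 - \tfrac12\frac{e^{-2\tau}|x|^2}{1-e^{-2\tau}}$ is bounded above (in fact the exponent can be made $\le -\delta|x|^2$ for some $\delta>0$ depending on $A$ and the gap between $\tau$ and $T_0$), so that $v^2 M \le B(-\tau)^2 e^{2A|x|^2} M$ decays like a Gaussian; restricting further to $|x|^2 > T_1$ and using $e^{-\delta|x|^2}\le e^{-\delta T_1/2}e^{-\delta|x|^2/2}$ gives a bound of the form $C\,\aV{B}_{L^\infty([-T_0,0])}^2 e^{-\delta T_1/2}$, which is a \emph{constant} — but we need $C_1\tau^{2K}$. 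This is absorbed by noting that on $0<\tau<T_0$ we have $\tau^{2K}$ bounded below only near $\tau=T_0$; so one should instead extract the full Gaussian decay factor $e^{-c/\tau}$ coming from the Mehler kernel prefactor and exponent as $\tau\to 0^+$, since $\frac{e^{-2\tau}}{1-e^{-2\tau}}\sim \frac1{2\tau}$, and then use $e^{-c/\tau} = O(\tau^{2K})$ as $\tau\to0^+$ for every $K$. I would carry this out carefully: on $\{|x|^2 > T_1\}$, $M(x,-\tau) \le (1-e^{-2\tau})^{-n/2}\exp(-\tfrac12 \frac{e^{-2\tau}T_1}{1-e^{-2\tau}})$ times a Gaussian-integrable factor, and the scalar prefactor $\exp(-c_{T_1}/\tau)$ beats any power of $\tau$.

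For the near region, I would use the pointwise bound $v^2(x,-\tau)\le 2C_0^2(|x|^{4K}+\tau^{2K})$ valid for $|x|^2\le T_1$ (note $|{-\tau}| = \tau < T_0 \le T_1$ so the hypothesis applies on that whole slice once $|x|^2 + \tau \le T_1$; for $T_1/2 < |x|^2 \le T_1$ with $|x|^2+\tau$ possibly exceeding $T_1$ one again falls back to the far-region Gaussian estimate, so really the clean split is $\{|x|^2 + \tau \le T_1\}$ versus its complement). On $\{|x|^2+\tau\le T_1\}$ we then must bound
\begin{equation*}
  \int_{\{|x|^2\le T_1\}} 2C_0^2\,|x|^{4K}\,M(x,-\tau)\,d\gamma + 2C_0^2\tau^{2K}\int_{\Omega} M(x,-\tau)\,d\gamma.
\end{equation*}
The second term is $\le 2C_0^2\tau^{2K}$ since $\int_{\R^n} M(x,-\tau)\,d\gamma = \int M_H(x,0,\tau)\,d\gamma = 1$ by the total-measure property of the Mehler kernel. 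The first term is the heart of the matter: change variables to exploit that $M(x,-\tau)\,d\gamma(x)$ is (up to the constant) the Gaussian density with variance $\sigma^2(\tau) := \frac{1-e^{-2\tau}}{e^{-2\tau}}\cdot(\text{something})$ — more precisely the exponent of $M(x,-\tau)e^{-|x|^2/2}$ is $-\tfrac12|x|^2\bigl(1 + \frac{e^{-2\tau}}{1-e^{-2\tau}}\bigr) = -\tfrac12\frac{|x|^2}{1-e^{-2\tau}}$, so $M(x,-\tau)\,d\gamma(x)$ is exactly the centered Gaussian measure on $\R^n$ with covariance $(1-e^{-2\tau})\mathrm{Id}$ (the prefactors cancel). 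Hence $\int_{\R^n}|x|^{4K}M(x,-\tau)\,d\gamma = (1-e^{-2\tau})^{2K}\,\mathbb{E}[|Z|^{4K}] = c_{n,K}(1-e^{-2\tau})^{2K}\le c_{n,K}(2\tau)^{2K}$, using $1-e^{-2\tau}\le 2\tau$. Extending the domain of integration from $\{|x|^2\le T_1\}$ to all of $\R^n$ only increases the integral, so this is legitimate.

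Collecting the three pieces — near-region polynomial term $\le c_{n,K}(2\tau)^{2K}\cdot 2C_0^2$, near-region constant term $\le 2C_0^2\tau^{2K}$, and far-region term $\le C_{n}\aV{B}^2_{L^\infty([-T_0,0])}e^{-c_{T_1}/\tau} \le C_{n,K,T_1}\aV{B}^2_{L^\infty([-T_0,0])}\tau^{2K}$ (using $e^{-c/\tau}\le (2K/(ec))^{2K}\tau^{2K}$ for $\tau>0$) — yields the claimed estimate $\int_\Omega v^2(x,-\tau)M(x,-\tau)\,d\gamma \le C_1\tau^{2K}$ with $C_1$ depending only on $n$, $K$, $\aV{B}_{L^\infty([-T_0,0])}$ and $C_0$ (the dependence on $T_1$ and $A$ being controlled through $T_0$). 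The main obstacle I anticipate is being careful about the exact split of the integration region so that the vanishing hypothesis is genuinely applicable where it is used — i.e. ensuring $|x|^2 + |{-\tau}| \le T_1$, which forces the clean decomposition $\{|x|^2 + \tau \le T_1\}$ versus complement rather than the naive $\{|x|^2 \le T_1\}$ — together with verifying that the Mehler-kernel-weighted Gaussian measure is exactly the $(1-e^{-2\tau})\mathrm{Id}$-covariance Gaussian so that the moment computation is clean; the two-sided estimate $c\tau \le 1-e^{-2\tau}\le 2\tau$ and the elementary bound $\sup_{\tau>0}e^{-c/\tau}\tau^{-2K}<\infty$ are then all that is needed to finish.
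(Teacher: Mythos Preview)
Your proposal is correct and takes a genuinely different route from the paper's own proof. The paper splits the integral at the $\tau$-dependent radius $|x|=\tau^{1/4}$: on the shrinking near region it bounds $v^2$ pointwise by a power of $\tau$ and then uses $\int_\Omega M\,d\gamma\le 1$, while on the far region $\{|x|>\tau^{1/4}\}$ it combines the $G(A,B_0)$-growth with the Mehler weight to extract a scalar factor $(1-e^{-2\tau})^{-n/2}\exp\bigl(-c\tau^{1/2}/(1-e^{-2\tau})\bigr)\sim\tau^{-n/2}e^{-c/\sqrt\tau}$, which it then bounds by $C_{n,K}\tau^{2K}$ via elementary calculus on $g(\theta)=-(n+4K)\log\theta-1/(16\theta)$. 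You instead split at the \emph{fixed} threshold $|x|^2+\tau=T_1$: on the near region you keep the full bound $v^2\le 2C_0^2(|x|^{4K}+\tau^{2K})$ and exploit the observation that $M(\cdot,-\tau)\,d\gamma$ is exactly the centered Gaussian with covariance $(1-e^{-2\tau})\mathrm{Id}$, so that $\int_{\R^n}|x|^{4K}M\,d\gamma=(1-e^{-2\tau})^{2K}\,\mathbb{E}|Z|^{4K}\le c_{n,K}(2\tau)^{2K}$; on the far region your fixed lower bound $|x|^2\gtrsim T_1$ gives the stronger decay $e^{-cT_1/\tau}$. The paper's version avoids the moment computation by shrinking the inner ball with $\tau$, whereas your version makes the Gaussian structure of $M\,d\gamma$ fully explicit, is more careful about the region on which the vanishing hypothesis $|x|^2+|t|\le T_1$ genuinely applies, and does not require tuning the splitting radius.
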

\begin{proof}
    Since $B(t)$ is a continuous function, we may assume that $B(t)\leq B_0$ on $[-T_0,0]$ for some constant $B_0>0$ and $v$ has $G(A,B_0)$-growth.
    Thus, with a direct calculation, we have for any $\tau<T_0$,
\begin{align}\label{E:G(A,B)controlled}
    &\aV{v^2(-\tau,x)\exp\bracket{-\frac{\av{x}^2}{4\bracket{1-e^{-2\tau}}}}}_{L^1(\Omega)}\\
    \leq&\quad B_0^2\aV{\exp\bracket{2A\av{x}^2-\frac{\av{x}^2}{4\bracket{1-e^{-2\tau}}}}}_{L^1(\Omega)}\notag\\
    \leq &\quad B_0^2\int_{\R^n} e^{-\pi\av{x}^2}dx = B_0^2.\notag
\end{align}
    We decompose the integral into two regions:
    \begin{align*}
        &\int_{\Omega}v^2(x,-\tau)M(x,-\tau)d\gamma \\
        =& \int_{\Omega\cap\{\av{x}\leq \tau^{1/4}\}}v^2(x,-\tau)M(x,-\tau)d\gamma + \int_{\Omega\cap\{\av{x}>\tau^{1/4}\}}v^2(x,-\tau)M(x,-\tau)d\gamma.
    \end{align*}
For the first integral, using the vanishing assumption, we have
    \begin{equation*}
        \int_{\Omega\cap\{\av{x}\leq \tau^{1/4}\}}v^2(x,-\tau)M(x,-\tau)d\gamma \leq 2C_0^2\tau^{2K}\int_{\Omega}M(x,-\tau)d\gamma \leq  2C_0^2 \tau^{2K}.
    \end{equation*}  
    For the second integral, using the explicit form \eqref{backwardMehlerkernel} of $M$ and the inequality \eqref{E:G(A,B)controlled}, we estimate that
    \begin{align*}
        &\int_{\Omega\cap\{\av{x}\geq \tau^{1/4}\}}v^2(x,-\tau)M(x,-\tau)d\gamma \\
        \leq &\bracket{1-e^{-2\tau}}^{-n/2}\exp\bracket{-\frac{\tau^{1/2} e^{-2\tau}}{4\bracket{1-e^{-2\tau}}}}\aV{v^2(-\tau,x)\exp\bracket{-\frac{\av{x}^2}{4\bracket{1-e^{-2\tau}}}}}_{L^1(\Omega)}\\
        \leq & B_0^2\bracket{1-e^{-2\tau}}^{-n/2}\exp\bracket{-\frac{\tau^{1/2} e^{-2\tau}}{4\bracket{1-e^{-2\tau}}}}.
    \end{align*}
For $\tau < T_0 < 1/8$, we have $\tau \leq 1 - e^{-2\tau} \leq 2\tau$ and $e^{-2\tau} > 1/2$. Therefore,
    \begin{equation*}
        \bracket{1-e^{-2\tau}}^{-n/2}\exp\bracket{-\frac{\tau^{1/2} e^{-2\tau}}{4\bracket{1-e^{-2\tau}}}}\leq \frac{1}{\tau^{n/2}} e^{-\frac{1}{16\tau^{1/2}}}.
    \end{equation*}
    
    Consider the function $g(\theta) = -(n + 4K)\log \theta - 1/(16\theta)$. By  calculus, we find that $g(\theta)$ achieves its maximum at $\theta = 1/(16(n + 4K))$, resulting in
    \begin{equation*}
        \frac{1}{\tau^{n/2}} e^{-\frac{1}{16\tau^{1/2}}}=\tau^{2K}e^{g(\tau^{1/2})} \leq C_{n,K}\tau^{2K},
    \end{equation*}
    where the constant $C_{n,K}$ is given by $\log C_{n,K}:=g({1}/{16(n + 4K)})$.
    Thus, 
    \begin{equation*}
        \bracket{1-e^{-2\tau}}^{-n/2}\exp\bracket{-\frac{\tau^{1/2} e^{-2\tau}}{2\bracket{1-e^{-2\tau}}}}\leq {C_{n,K}}\tau^{2K}.
    \end{equation*}
    Combining both estimates, we conclude that
    \begin{align*}
        \int_{\Omega}v^2(x,-\tau)M(x,-\tau)d\gamma
        \leq\bracket{2C_0^2+C_{n,K}B_0^2}\tau^{2K}.
    \end{align*}
    This completes the proof with $C_1:=2C_0^2+C_{n,K}B_0^2$.
\end{proof}

Now we are ready to prove our main theorem with a standard argument.

\begin{proof}[Proof of Theorem \ref{main_theorem}]
By translation, we may assume without loss of generality that the solution $u$ vanishes to infinite order at the origin $(0, 0)$ and the conditions of \eqref{b_and_c_bounded} are preserved. We argue by contradiction. Assume that $u$ is a nontrivial solution. Since $u(x,t)$ has $G(A,B(t))$-growth and $\av{\nabla u}\in  W^{1,2}\bracket{\Omega;e^{-2A\av{x}^2}d\gamma}$, we obtain that $u\in  W^{2,2}\bracket{\Omega;e^{-2A\av{x}^2}d\gamma}$.

First, we claim that $u(x,t)\equiv 0$ for all $x\in\Omega$ and $t\leq 0$. If not, define
\begin{equation*}
    s_0:=\inf\{s:u(x,t)=0\ \textit{for all}\ x\in\R^n\ \textit{and}\ s\leq t\leq 0\}>-T.
\end{equation*}
Without loss of generality, we may assume $s_0=0$, otherwise we can consider $\tilde{u}(x,t)={u}(x,t-s_0)$.
Thus, there is some positive constant $S<T_0/2$ small enough, such that $H(\tau)>0$ for all positive $\tau<S$, where $T_0$ is given in Lemma \ref{L:Calculation_of_frequency}.

Note from inequalities \eqref{E:equation_I_computation}, \eqref{E:equation_H'} and \eqref{E:F(u,gradient_u)_estimate} that 
\begin{align*}
    H^\prime(\tau) = 2\bracket{I(\tau)-\int_{\Omega} u(b \cdot \nabla u + cu) M d\gamma}\leq 2(n+2)L^2\bracket{I(\tau)+{H(\tau)}}.
\end{align*}
Therefore, from inequality \eqref{estimated_N(R)}, we obtain for any $0<\tau<\tau_0<S$,
\begin{equation*}
    \frac{(1-e^{-2\tau})H^\prime(\tau)}{H(\tau)}\leq 2(n+2)L^2\bracket{N(\tau)+1}\leq C_2e^{-2\tau},
\end{equation*}
where $C_2$ is a constant depending on $n,L,N(S)$.
Integrating this inequality from $\tau$ to $\tau_0$ yields
\begin{align*}
    \log\frac{H(\tau_0)}{H(\tau)}\leq C_2\log\bracket{\frac{1-e^{-2\tau_0}}{1-e^{-2\tau}}}.
\end{align*}
Therefore, for any $0<\tau<\tau_0$,
\begin{align}\label{E:growth_of_H}
    H(\tau)\geq \frac{ H(\tau_0)}{\bracket{1-e^{-2\tau_0}}^{C_2}}\bracket{1-e^{-2\tau}}^{C_2}\geq \frac{ H(\tau_0)}{\bracket{1-e^{-2\tau_0}}^{C_2}}\tau^{C_2}.
\end{align}

However, Lemma \ref{L:vanishing_order_of_H(R)} shows that if $u$ vanishes to infinite order at $(0,0)$, then for any $K>0$, there is some constant $C_1=C(n,K,A,C_0,\aV{B}_{L^{\infty}([-T_0,0])})$ such that 
\begin{equation*}
    H(\tau)\leq C_1\tau^{2K}.
\end{equation*}
This leads to a contradiction with \eqref{E:growth_of_H} when we choose $K>C_2/2$.

Now it suffices to show that $u(x,t)\equiv 0$ for all $x\in\Omega$ and $t>0$. Similarly, if not, after translation in time, we may assume that there is some constant $0<S<T_0/2$ such that $H_{2S}(\tau)\neq 0$ for any $0<\tau<2S$, where 
\begin{equation*}
    H_{2S}(\tau):=\int_{\Omega} {u}^2(x,2S-\tau)M(x,-\tau)d\gamma(x).
\end{equation*}
Let $N_{2S}(\tau)$ be given by \eqref{frequency_s}. Then with inequalities \eqref{E:equation_I_computation}, \eqref{E:equation_H'} and \eqref{E:F(u,gradient_u)_estimate}, we derive
\begin{align*}
    H_{2S}^\prime(\tau) = 2\bracket{I_{2S}(\tau)-\int_{\Omega} u(b \cdot \nabla u + cu) M d\gamma}\geq -2(n+2)L^2\bracket{I_{2S}(\tau)+{H_{2S}(\tau)}}.
\end{align*}
Thus, it follows from inequality \eqref{estimated_N(R)} that for any $S<\tau<2S$,
\begin{equation*}
    \frac{(1-e^{-2\tau})H_{2S}^\prime(\tau)}{H_{2S}(\tau)}\geq -2(n+2)L^2\bracket{N_{2S}(\tau)+1}\geq -C_2e^{-2\tau},
\end{equation*}
where $C_2$ is a constant depending on $n,L,N_{2S}(S)$.
Integrating this inequality from $S$ to $\tau$ yields
\begin{align*}
    \log\frac{H_{2S}(\tau)}{H_{2S}(S)}\geq -C_2\log\bracket{\frac{1-e^{-2\tau}}{1-e^{-2S}}}.
\end{align*}
Therefore, for any $S<\tau<2S$,
\begin{align*}
    H_{2S}(\tau)\geq \frac{ H_{2S}(S)}{\bracket{1-e^{-2\tau}}^{C_2}}\bracket{1-e^{-2S}}^{C_2}.
\end{align*}
Letting $\tau\rightarrow 2S$ yields
\begin{align*}
    0=H_{2S}(2S)\geq\frac{ H_{2S}(S)}{\bracket{1-e^{-4S}}^{C_2}}\bracket{1-e^{-2S}}^{C_2}>0,
\end{align*}
which is a contradiction. Hence,  $u$ is identically zero.
\end{proof}
\begin{remark}
    From the above argument and Lemma \ref{L:Calculation_of_frequency}, we know that the conclusion of Theorem \ref{main_theorem} also holds for solutions of the nonlinear parabolic equation  \eqref{nonlinear_equation} with the nonlinear term $F(u,\nabla u)$ satisfying that for some constant $L>0$,
    \begin{equation*}
        \av{F(u,\nabla u)}\leq L\left(\av{\nabla u}+(1+\av{x})\av{u}\right).
    \end{equation*}
 \end{remark}

To derive the unique continuation for \eqref{heat_equation_potential} with a singular potential $V(x)$, we need a slightly different definition.
Let $u$ be a solution of \eqref{heat_equation_potential}. Define
\begin{align*}
    H(\tau):=\int_{\Omega\times\{t=-\tau\}} u^2Md\gamma, \ I(\tau):=\int_{\Omega\times\{t=-\tau\}} \bracket{\av{\nabla u}^2+Vu^2}Md\gamma
\end{align*}
and 
\begin{align*}
    N(\tau) := \bracket{1-e^{-2\tau}}\frac{I(\tau)}{H(\tau)}.
\end{align*}
With a similar calculation as in the proof of Lemma \ref{L:Calculation_of_frequency},  we have
\begin{align*}
      H^\prime(\tau)=-2\int_{\Omega} u\bracket{u_t+\nabla u\cdot\frac{\nabla M}{M}}Md\gamma=2I(\tau),
\end{align*}
and
\begin{align*}
    I^\prime(\tau)=&2\int_{\Omega} \left(\bracket{u_t+\frac{{\nabla u \cdot \nabla M}}{M}}^2-\frac{e^{-2\tau}}{1-e^{-2\tau}}\av{\nabla u}^2\right)M d\gamma\\
        &+\frac{e^{-2\tau}}{1-e^{-2\tau}}\int_{\Omega}\bracket{x\cdot\nabla V}u^2M d\gamma+\int_{\partial\Omega}\bracket{\av{\nabla u}^2\partiald{M}{\nu}-2\nabla u\cdot{\nabla M}\partiald{u}{\nu}}d\gamma_\sigma.
\end{align*}
Since $\Omega$ is convex, we know that
\begin{align*}
        \int_{\partial\Omega}\bracket{\av{\nabla u}^2\partiald{M}{\nu}-2\nabla u\cdot{\nabla M}\partiald{u}{\nu}}d\gamma_\sigma=\int_{\partial\Omega}\av{\nabla u}^2\frac{(x\cdot\nu)e^{-2\tau}}{1-e^{-2\tau}}Md\gamma_\sigma\geq 0.
    \end{align*}
    Therefore, applying H\"{o}lder's inequality, we have 
\begin{align}\label{E:N'(tau)_V}
        N^\prime(\tau)&= \frac{1-e^{-2\tau}}{H(\tau)^2}\bracket{H(\tau)I^\prime(\tau)-I(\tau)H^\prime(\tau)+\frac{2e^{-2\tau}}{1-e^{-2\tau}}I(\tau)H(\tau)}\\
        &\geq \frac{e^{-2\tau}}{H(\tau)}\int_{\Omega\times\{t=-\tau\}}\bracket{x\cdot\nabla V+2V}u^2M d\gamma.\notag
    \end{align}
Following the approach presented in  \cite{Poon1996}*{Proof of Theorem 1.2}, we prove Theorem \ref{T:main_theorem_V}.
\begin{proof}[Proof of Theorem \ref{T:main_theorem_V}]
After translation in time, we may assume that the solution $u$ vanishes to infinite order at $(x_0,0)$. 

First, we claim that the point $x_0\neq 0$. We prove this by contradiction: assume that $x_0=0$. 

For the first case \eqref{E:V_case_1}, with variation of parameters, we can derive that there exists a constant $c_0>0$ such that
\begin{equation*}
    \av{v(r)}\leq \frac{r^2}{4}
     + \frac{1}{2} + \frac{c_0}{r^2}. 
\end{equation*}
Since $v$ has no singular points, we may assume $\av{v(r)}\leq 1+r^2$ for $x\in\Omega$ and $r=\av{x}$. 
Note that by the inequality \eqref{E:quadratic_growth}, $I(\tau)$ and $N(\tau)$ are well-defined. Moreover, we obtain
\begin{align*}
    \int_{\Omega\times\{t=-\tau\}}\bracket{1+\av{x}^2}u^2M d\gamma
    &\leq \frac{1-e^{-2\tau}}{e^{-2\tau}}\bracket{nH(\tau)+I(\tau)-\int_{\Omega}Vu^2M d\gamma}\\
    &\leq \frac{1-e^{-2\tau}}{e^{-2\tau}}\bracket{nH(\tau)+I(\tau)-L\int_{\Omega}\bracket{1+\av{x}^2}u^2M d\gamma}.
\end{align*}
Then choosing $T_0$ small enough, we have for any $\tau<T_0$, 
\begin{align*}
    \int_{\Omega\times\{t=-\tau\}}\bracket{1+\av{x}^2}u^2M d\gamma
    \leq 2\frac{1-e^{-2\tau}}{e^{-2\tau}}\bracket{nH(\tau)+I(\tau)}.
\end{align*}
Thus, combining inequalities \eqref{E:V_case_1} and \eqref{E:N'(tau)_V}, it follows that
\begin{align*}
    N^\prime(\tau)&\geq -\frac{e^{-2\tau}L}{H(\tau)}\int_{\Omega\times\{t=-\tau\}}\bracket{1+\av{x}^2}u^2M d\gamma\\
    &\geq -\frac{2L(1-e^{-2\tau})}{H(\tau)}\bracket{nH(\tau)+I(\tau)}\\
    &\geq -C(N(\tau)+1),
\end{align*}
where $C$ is a constant depending on $n,L$.

For the second case, we have
\begin{equation*}
    x\cdot\nabla V+2V =\bracket{rv^\prime(r)+2v(r)} w\bracket{\frac{x}{\av{x}}}=(2-q) V(x).
\end{equation*}
The inequality \eqref{E:singularity_at_a} tells us that
\begin{equation*}
    \int_{\Omega\times\{t=-\tau\}}Vu^2M d\gamma\leq L\int_{\Omega\times\{t=-\tau\}}\bracket{\frac{2-q}{2}+\frac{q}{2\av{x}^2}}u^2M d\gamma<+\infty,
\end{equation*}
which means that $I(\tau)$ and $N(\tau)$ are well-defined.
Since $V\geq 0$ for $q<2$ and $ x\cdot\nabla V+2V=0$ for $q=2$, combining inequality \eqref{E:N'(tau)_V}, we obtain
\begin{align*}
    N^\prime(\tau)&\geq \frac{e^{-2\tau}\bracket{2-q}}{H(\tau)}\int_{\Omega\times\{t=-\tau\}}Vu^2M d\gamma\geq 0.
\end{align*}

Thus, for both cases, $N^\prime(\tau)\geq -C(N(\tau)+1)$, which means the inequality \eqref{estimated_N(R)}
 holds. Thus,
\begin{equation*}
     \frac{(1-e^{-2\tau})H^\prime(\tau)}{H(\tau)}=N(\tau)\leq C_2e^{-2\tau}.
\end{equation*}
Therefore, for any $0<\tau<\tau_0$,
\begin{align*}
    H(\tau)\geq \frac{ H(\tau_0)}{\bracket{1-e^{-2\tau_0}}^{C_2}}\bracket{1-e^{-2\tau}}^{C_2}\geq \frac{ H(\tau_0)}{\bracket{1-e^{-2\tau_0}}^{C_2}}\tau^{C_2},
\end{align*}
which is a contradiction with Lemma \ref{L:vanishing_order_of_H(R)} if $u$ vanishes to infinite order at $(0,0)$. Thus, $x_0\neq 0$.

In both cases, $V\in L_{loc}^{\infty}(\Omega\backslash\{0\})$. It follows from Lin's result \cite{Lin1990} that $u(x,0)\equiv 0$ in $\Omega$. And with a similar argument in the proof of Theorem \ref{main_theorem}, we have $u(x,t)=0$ for all $x\in\Omega$ and $t\geq 0$.

Next, we extend the conclusion  $u(x,t)\equiv 0$ for any $x\in\Omega$ to all times via an induction argument. For a fixed constant $0<S\leq T/2$ and $-2S<\tau<0$, define
\begin{equation*}
    D(\tau):= \int_\Omega u(x,\tau-2S)u(x,-\tau)d\gamma.
\end{equation*}
Then applying integration by parts and noticing that $u_t=L_\gamma u-Vu$ and $u\big|_{\partial\Omega}=0$, we get
\begin{align*}
    D^\prime(\tau)&=\int_\Omega \bracket{u_t(x,\tau-2S)u(x,-\tau)-u(x,\tau-2S)u_t(x,-\tau)}d\gamma\\
    &=\int_\Omega \bracket{L_\gamma u(x,\tau-2S)u(x,-\tau)-u(x,\tau-2S)L_\gamma u(x,-\tau)}d\gamma\\
    &=0.
\end{align*}
Since $D(0)=0$, we obtain that $D(S)=\int_\Omega u(x,-S)^2d\gamma=0$, which implies that $u(x,-S)\equiv0$ for any $x\in\Omega$. Since $S$ is any fixed constant in $(0,T/2]$, we have $u(x,t)\equiv 0$ for any $x\in\Omega$ and $t\geq-T/2$. 

By induction, assume that  $u(x,t)\equiv 0$ for any $x\in\Omega$ and $t\geq-T(1-{2^{-k}})$ for $k\geq 1$. Repeating the argument for $u(x,t+T(1-2^{-k}))$, we derive that $u(x,t)\equiv 0$ for any $x\in\Omega$ and $t\geq-T(1-2^{-k-1})$. Thus,  $u(x,t)\equiv 0$ for any $x\in\Omega$ and $t\geq-T(1-{2^{-k}})$ for any $k\geq 1$. The proof is complete by taking $k\rightarrow\infty$.
\end{proof}

\section*{Acknowledgments}
This work was supported by the Laboratory of Mathematics for Nonlinear Science at Fudan University and the Natural Science Foundation of Jiangsu Province (Grant No. BK20231309).                                                                                                                                                                                                     
\bibliography{Parabolic_Frequency_on_Gaussian_Spaces}                        

\end{document}